\numberwithin{equation}{section}
\newtheorem{theorem}{Theorem}
\newtheorem{proposition}[theorem]{Proposition}
\newtheorem{lemma}[subsection]{{\bf Lemma}}
\newtheorem{coro}[subsection]{{\bf Corollary}}
\newcommand{\C}{\mbox{$\mathbb C$}}     
\begin{document}

\title[Multiple Lucas Dirichlet series ]{Multiple Lucas Dirichlet series associated to additive and Dirichlet characters} 

\author[Meher]{N. K. Meher}
\address{Nabin Kumar Meher,  National Institute of Science Education and Research, Bhubaneswar , HBNI\\ P.O. Jatni, Khurda, Odisha -752050, India.}
\email{mehernabin@gmail.com}

\author[Rout]{S. S. Rout}
\address{Sudhansu Sekhar Rout, Institute of Mathematics and Applications\\ Andharua, Bhubaneswar, Odisha - 751029\\ India.}
\email{lbs.sudhansu@gmail.com; sudhansu@iomaorissa.ac.in}

\thanks{2010 Mathematics Subject Classification: Primary 11M99, Secondary 11B39, 30D30. \\
Keywords: Analytic continuation, multiple Lucas $L$-function, Lucas sequence, Residues and poles}
\maketitle
\pagenumbering{arabic}
\pagestyle{headings}

\begin{abstract}
In this article, we obtain the analytic continuation of the multiple shifted Lucas zeta function, multiple Lucas $L$-function associated to Dirichlet characters and additive characters. We then compute a complete list of exact singularities and residues of these functions at these poles. Further, we show the rationality of the multiple Lucas $L$-functions associated with quadratic characters at negative integer arguments.
\end{abstract}

\section{Introduction}
Let $P$ and $Q$ be real number. Then the Lucas sequence of the first kind is defined by
\begin{equation}\label{eq1}
U_n = U_n (P, Q):= \frac{\alpha^n - \beta^n}{\alpha- \beta},\;\;  \mbox{for}\;\; n \geq 0
\end{equation}
 where $\alpha$ and $\beta$ are the roots of the quadratic equation $x^2 - Px + Q = 0$. If we choose
$P = 1$ and $Q = -1,$ then we obtain  the Fibonacci sequence which is  defined by the binary recurrence  
\[F_0 = 0, \;\; F_1 = 1,\;\;\;F_n = F_{n-1} + F_{n-2} , \quad (n \geq 2).\]
 
 Let $\chi$ be a Dirichlet character (mod $q$) and 
 \begin{equation}\label{eq2}
 \mathcal{L}_{U}(s, \chi):= \sum_{n =1}^{\infty}\frac{\chi(n)}{U_n^s}, \quad (\mbox{Re}(s) > 0)
 \end{equation}
 which is an analogue of the Dirichlet $L$-function $L(s, \chi) = \sum_{n =1}^{\infty}\chi(n)/n^s$. The function \eqref{eq2} can be analytically continued to the whole complex plane (see \cite{Kamano2013, Ram2001, Navas2001}). Further, it has been proved that if the character $\chi$ is non-principal, then $\mathcal{L}_{U}(s, \chi)$ is holomorphic on the real axis. The Fibonacci zeta function 
\begin{equation}{\label{eq3}}
\zeta_F(s) = \sum_{n =1}^{\infty} \frac{1}{F_n^{s}},  \quad (\mbox{Re}(s) > 1)
\end{equation}
is the special case of \eqref{eq2}. For the  special values of Fibonacci zeta function one can refer \cite{Jeannin1989, Duverney1997, Elsner2007, Ram2001, Navas2001}. 
\smallskip

In 1882, Hurwitz \cite{Hurwitz}, defined the ``shifted" Riemann zeta function, $\zeta(s; x)$ by the series 
\begin{equation}\label{eq4}
\zeta(s; x):= \sum_{n =0}^{\infty}\frac{1}{(n +x)^s}, \quad (\mbox{Re}(s) > 0)
\end{equation}
for any $x$ satisfying $0< x\leq 1$.  The analytic continuation for the Hurwitz zeta function gives us the analytic continuation for the Dirichlet $L$-function since 
\begin{equation}\label{eq5}
L(s, \chi):= \sum_{n =1}^{\infty}\frac{\chi(n)}{n^s} = q^{-s} \sum_{a\ (\mathrm{mod}\ q)} \chi (a) \zeta(s; a/q)
\end{equation}
 where $\chi$ is the Dirichlet character (mod $q$).
 
The special values of the Riemann zeta function stimulates  the study of the analytic continuation of the multiple zeta functions defined as: 
  \begin{equation}\label{eq6}
\zeta(s_1, \ldots,s_d):= \sum_{0<n_1<n_2\cdots<n_d}\frac{1}{{n_1}^{s_1} \cdots {n_d}^{s_d}}.
\end{equation}
There are several nice identities in the literature of multi zeta values. In recent years, several mathematicians are focusing their attention to the analytic properties of multiple zeta function. The series \eqref{eq6} converges absolutely for Re$(s_i) \geq 1$ where $2 \leq i \leq d$ and Re$(s_1) > 1$ and therefore, one can consider the problem of analytic continuation of \eqref{eq6} and this question has been studied by several authors (see for example \cite{Akiyama2001, Arakawa1999, Atkinson1949, Zhao2000}). Later, several generalizations of the multiple zeta functions were introduced and their analytic properties were discussed.  One of the important generalization is the multiple series associated to Dirichlet characters which we call as multiple Dirichlet $L$- functions. The multiple Dirichlet $L$-function is defined as 
\begin{equation}\label{eq7}
L(s_1, \ldots, s_d,\mid \chi_1, \ldots,\chi_d):= \sum_{0<n_1<n_2\cdots<n_d}\frac{\chi_1(n_1) \cdots \chi_{d}(n_d)}{n_{1}^{s_1}\cdots n_{d}^{s_d}}
\end{equation}
 where $\chi_1, \ldots, \chi_d$ are Dirichlet characters of same modulus $q\geq 2$. The analytic theory of the multiple Dirichlet $L$-functions has been studied by Akiyama and Ishikawa (\cite{Akiyama2002}, \cite{Mehta2017}).

 Recently, Meher and Rout  \cite{Meher2018} studied the meromorphic continuation of the multiple Lucas zeta function,
 \begin{equation}\label{eq8}
\zeta_U(s_1, \ldots,s_d):= \sum_{0<n_1<n_2\cdots<n_d}\frac{1}{U_{n_1}^{s_1} \cdots U_{n_d}^{s_d}},
\end{equation}
 where $(U_n)$ is the Lucas sequence of the first kind. Here the sum $s_1 + \cdots + s_d$ is called the weight of $\zeta_U(s_1, \ldots,s_d)$ and $d$ is called its depth. In this paper, we obtain the meromorphic continuation of multiple shifted Lucas zeta function and also multiple Lucas $L$-function associated with Dirichlet characters and additive characters.
 
 Let $\chi_1, \ldots, \chi_d$ be the Dirichlet characters of same modulus $q\geq 2$ and  $\chi_0$ be the principal character. Then {\em multiple shifted Lucas zeta function} and {\em multiple Lucas $L$-function} are defined respectively as
 \begin{equation}\label{eq9}
 \zeta_{U}^{d}(s_1, \ldots,s_d\mid r_1, \ldots, r_d):= \sum_{n_1 = 0,\cdots,  n_d = 0}^{\infty}  \frac{1}{U_{qn_1+r_1}^{s_1}} \cdots \frac{1}{U_{q(n_1 + \cdots + n_d)+(r_1+\cdots+r_d)}^{s_d}} ,
 \end{equation}
 and
  \begin{equation}\label{eq10}
 \mathcal{L}_{U}^{d}(s_1, \ldots, s_d\mid \chi_1, \ldots, \chi_d):= \sum_{0<n_1<n_2\cdots<n_d}\frac{\chi_1(n_1)}{U_{n_{1}}^{s_1}} \cdots \frac{\chi_d(n_d)}{U_{n_{d}}^{s_d}},
 \end{equation}
 where $q\in \mathbb{N}_{\geq 2}$ and $n_i, r_i\in \mathbb{N}$ for $1\leq i \leq d$. For convenience, we use $\zeta_{U}^{d}(\bf s\mid \bf r)$ and $\mathcal{L}_{U}^{d}(\bf s\mid \bf \chi)$ for $\zeta_{U}^{d}(s_1, \ldots,s_d\mid r_1, \ldots, r_d)$ and $ \mathcal{L}_{U}^{d}(s_1, \ldots, s_d\mid \chi_1, \ldots, \chi_d)$ respectively. Linear independence and irrationality results of the series \eqref{eq10} have been studied in \cite{E2017} when $d = 1 = s_1$ and $U_n = F_n$.

Further, we consider additive characters, i.e., group homomorphisms \[f_1, \ldots, f_d: \mathbb{Z}\longrightarrow \mathbb{C}^{*}\]and study the multiple Lucas Dirichlet series associated to these additive characters:
 \begin{align}\label{eq11}
 \begin{split}
 \mathcal{L}_{U}^{d}(s_1, \ldots, s_d\mid f_1, \ldots, f_d) &:= \sum_{0<n_1<n_2\cdots<n_d}\frac{f_1(n_1)}{U_{n_{1}}^{s_1}} \cdots \frac{f_d(n_d)}{U_{n_{d}}^{s_d}}\\
 &= \sum_{0<n_1<n_2\cdots<n_d}\frac{f_1(1)^{n_1}}{U_{n_{1}}^{s_1}} \cdots \frac{f_d(1)^{n_d}}{U_{n_{d}}^{s_d}}.
 \end{split}
 \end{align}
 In this case, we call this multiple Lucas series as {\em multiple Lucas additive $L$-function of depth $d$ associated to additive characters $f_1, \ldots, f_d$} which we simply write $\mathcal{L}_{U}^{d}(\bf s\mid \bf f)$. A prototypical example of such type of multiple series is:
 \[\sum_{n=1}^{\infty}\frac{(-1)^{n}}{F_n^s},\] 
 and irrationality of such series is obtained by Andr\'{e}-Jeannin \cite{Jeannin1989} when $s=1$.
 
The organization of the paper is as follows. In Section \ref{prem}, we prove that $\zeta_{U}^{d}(\bf s\mid \bf r)$ and $\mathcal{L}_{U}^{d}(\bf s\mid \bf \chi)$ series are absolutely convergent in the domain $\{(s_1, \ldots, s_d)\in \C^d | \; \sum_{i = j}^{d}\mathrm{Re}(s_i)  > 0, \; 1 \leq j \leq d \}$. We show that \eqref{eq11} is absolutely convergent in the domain $\{(s_1, \ldots, s_d)\in \C^d | \; \sum_{i = j}^{d}\mathrm{Re}(s_i)  > 0, \; 1 \leq j \leq d \}$ with assumption that the partial products $\prod_{k=i}^{d}\left|f_{k}(1)\right|\leq 1$ for all $1\leq i\leq d$. In Section \ref{sec3}, first we obtain the meromorphic  continuation of  $\zeta_{U}^{d}(\bf s\mid \bf r)$ and $\mathcal{L}_{U}^{d}(\bf s\mid \bf f)$. Also, we establish an explicit relation between multiple shifted Lucas zeta function $\zeta_{U}^{d}(\bf s\mid \bf r)$ and multiple Lucas $L$-function $\mathcal{L}_{U}^{d}(\bf s\mid \bf \chi)$. Using this relation, we prove the meromorphic continuation of $\mathcal{L}_{U}^{d}(\bf s\mid \bf \chi)$. Section \ref{sec4} is devoted to compute a complete list of poles and residues of the series $\mathcal{L}_{U}^{d}(\bf s\mid \bf \chi)$ and $\mathcal{L}_{U}^{d}(\bf s\mid \bf f)$ at these poles. Finally in Section \ref{sec5}, we show that the special values of the multiple Lucas zeta function associated to Dirichlet characters at negative integer arguments are rational.

 \section{Preliminaries}\label{prem}
Throughout this paper, we assume that
\begin{equation}\label{eq3b}
P > 0, Q \neq 0, D: = P^2 - 4 Q,\;\;\mbox{and}\;\; 
  \begin{cases} 
   Q < P -1 & \text{if } 0 < P \leq 2 \\
   Q \leq P-1       & \text{if } P>2.
  \end{cases}
  \end{equation}

For an integer $d \geq 1$, we consider the open subset of $\mathfrak{D}_d$ of $\mathbb{C}^d$:
\[\mathfrak{D}_d := \{(s_1, \ldots, s_d)\in \C^d | \; \sum_{i = j}^{d}\mathrm{Re}(s_i)  > 0, \; 1 \leq j \leq d \}.\]

We need the following proposition from \cite{Meher2018} to prove the absolute convergence of multiple shifted Lucas zeta function $\zeta_{U}^{d}(\bf s\mid \bf r)$ and multiple Lucas $L$-function $\mathcal{L}_{U}^{d}(\bf s\mid \bf \chi)$.
\begin{proposition}[\cite{Meher2018}]\label{prop1.1}
The series 
\[\sum_{0<n_1<\cdots < n_d}\frac{1}{U_{n_1}^{s_1}\cdots U_{n_d}^{s_d}}\] is absolutely convergent in the domain  $\mathfrak{D}_d$. 
\end{proposition}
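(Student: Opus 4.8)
The plan is to reduce everything to the real parts and then to a single geometric estimate after a summation-by-parts change of variables. Write $\sigma_i = \mathrm{Re}(s_i)$. Since $P,Q$ are real, every $U_n$ is real, and the hypotheses \eqref{eq3b} are arranged precisely so that the characteristic polynomial has real distinct roots with dominant root $\alpha = (P+\sqrt{D})/2 > 1$. Indeed, one checks that \eqref{eq3b} forces $D = P^2 - 4Q > 0$ and $\sqrt{D} > 2 - P$, which is exactly the inequality $\alpha > 1$; moreover $|\alpha| > |\beta|$ (the difference of absolute values being $\sqrt{D}$ or $P$, both positive), and $Q \neq 0$ gives $\beta \neq 0$. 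From $U_n = \frac{\alpha^n}{\alpha-\beta}\bigl(1 - (\beta/\alpha)^n\bigr)$ and $\rho := |\beta/\alpha| < 1$ one then obtains $U_n > 0$ for all $n \geq 1$ together with the two-sided bound
\[
c_1 \alpha^n \leq U_n \leq c_2 \alpha^n, \qquad c_1 = \frac{1-\rho}{\alpha-\beta}, \quad c_2 = \frac{1+\rho}{\alpha-\beta},
\]
valid for all $n \geq 1$ with $0 < c_1 \leq c_2$.

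With this bound in hand, the second step controls a single factor. Because $|U_n^{s}| = U_n^{\sigma}$ for $U_n > 0$, and because the two-sided bound estimates $U_n^{-\sigma}$ regardless of the sign of $\sigma$ (using the lower bound when $\sigma \geq 0$ and the upper bound when $\sigma < 0$), there is a constant $K(\sigma)$ with
\[
\frac{1}{|U_n^{s}|} = \frac{1}{U_n^{\sigma}} \leq K(\sigma)\,\alpha^{-n\sigma}
\]
for every $n \geq 1$. Multiplying over $i = 1,\ldots,d$ bounds the absolute value of a general term by $\bigl(\prod_i K(\sigma_i)\bigr)\,\alpha^{-\sum_{i=1}^{d} n_i\sigma_i}$, so it suffices to prove convergence of $\sum_{0<n_1<\cdots<n_d}\alpha^{-\sum_i n_i\sigma_i}$.

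The final and decisive step is the rearrangement that turns the domain condition into convergence. Setting $m_k = n_k - n_{k-1}$ with $n_0 = 0$, so that $m_k \geq 1$ and $n_i = m_1 + \cdots + m_i$, interchanging the order of summation gives $\sum_{i=1}^{d} n_i\sigma_i = \sum_{k=1}^{d} m_k T_k$, where $T_k := \sum_{i=k}^{d}\sigma_i$ is exactly the $k$-th tail sum appearing in the definition of $\mathfrak{D}_d$. Hence
\[
\sum_{0<n_1<\cdots<n_d}\alpha^{-\sum_i n_i\sigma_i} = \sum_{m_1\geq 1}\cdots\sum_{m_d\geq 1}\prod_{k=1}^{d}\alpha^{-m_k T_k} = \prod_{k=1}^{d}\frac{\alpha^{-T_k}}{1-\alpha^{-T_k}},
\]
and each geometric factor converges because $(s_1,\ldots,s_d)\in\mathfrak{D}_d$ means $T_k > 0$, while $\alpha > 1$ forces $0 < \alpha^{-T_k} < 1$. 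This establishes absolute convergence throughout $\mathfrak{D}_d$.

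I expect the main obstacle to be the first step rather than the last: verifying from the delicate case hypotheses \eqref{eq3b} that the dominant root satisfies $\alpha > 1$ with $|\alpha| > |\beta|$, and extracting uniform positive constants $c_1, c_2$ in the two-sided bound, especially in the range $0 < P < 2$ where the boundary inequality $Q < P-1$ must be tracked carefully. Once the clean estimate $c_1\alpha^n \leq U_n \leq c_2\alpha^n$ is available, the change of variables to the gaps $m_k$ and the resulting product of geometric series is routine; it is precisely this rearrangement into tail sums $T_k$ that explains why the natural domain of convergence is cut out by the partial conditions $\sum_{i=j}^{d}\mathrm{Re}(s_i) > 0$ rather than by conditions on the individual exponents.
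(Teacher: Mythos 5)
Your proposal is correct and is essentially the argument of record: this paper does not prove the proposition itself but imports it from \cite{Meher2018}, and that proof (like the nested-sum estimates \eqref{eq3d}--\eqref{eq3c} appearing here) rests on exactly the two ingredients you use, namely the two-sided estimate $c_1\alpha^n \le U_n \le c_2\alpha^n$ with $\alpha>1$ forced by \eqref{eq3b}, and the passage to gap variables $m_k = n_k - n_{k-1}$, which turns the multiple sum into a product of geometric series governed by the tail sums $\sum_{i=k}^{d}\mathrm{Re}(s_i)$ cutting out $\mathfrak{D}_d$. Your verification that \eqref{eq3b} yields $D>0$, $\alpha>1$, $|\alpha|>|\beta|$, and hence $U_n>0$ with uniform constants, is accurate in all cases, including the boundary case $Q=P-1$, $P>2$, where $\beta=1$.
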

\begin{proposition}\label{prop1}
The multiple shifted Lucas zeta function $\zeta_{U}^{d}(\bf s\mid \bf r) $
is absolutely convergent in $\mathfrak{D}_d$. 
\end{proposition}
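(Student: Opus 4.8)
The plan is to reduce the statement to Proposition \ref{prop1.1} by a change of summation variables that embeds the index set of $\zeta_U^d(\mathbf{s}\mid\mathbf{r})$ into the set of strictly increasing tuples. Introduce the new indices
\[
m_k := q(n_1+\cdots+n_k)+(r_1+\cdots+r_k),\qquad 1\le k\le d,
\]
so that the general term of \eqref{eq9} is exactly $\prod_{k=1}^{d}U_{m_k}^{-s_k}$. Setting $m_0=0$, the consecutive differences are $m_k-m_{k-1}=qn_k+r_k$. Under the standing hypotheses \eqref{eq3b} the dominant root satisfies $\alpha>1$ and $U_n>0$ for every $n\ge 1$; in particular $|U_{m_k}^{-s_k}|=U_{m_k}^{-\sigma_k}$ with $\sigma_k:=\mathrm{Re}(s_k)$, so absolute convergence concerns the nonnegative series $\sum\prod_k U_{m_k}^{-\sigma_k}$.

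First I would record that the map $(n_1,\ldots,n_d)\mapsto(m_1,\ldots,m_d)$ is injective (from $m_k-m_{k-1}=qn_k+r_k$ one recovers $n_k$) and lands inside $\{0<m_1<m_2<\cdots<m_d\}$: each difference $m_k-m_{k-1}=qn_k+r_k\ge r_k\ge 1$ is strictly positive, which is also what makes the $U_{m_k}$ well defined and positive. Consequently the series of absolute values of $\zeta_U^d(\mathbf{s}\mid\mathbf{r})$ is a subseries of $\sum_{0<m_1<\cdots<m_d}\prod_{k=1}^{d}U_{m_k}^{-\sigma_k}$, a series of nonnegative terms. Since a subsum of a convergent series of nonnegative terms is again convergent, Proposition \ref{prop1.1} immediately yields absolute convergence of $\zeta_U^d(\mathbf{s}\mid\mathbf{r})$ throughout $\mathfrak{D}_d$.

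If one instead prefers a self-contained estimate (which also dispenses with the strictness assumption $r_k\ge 1$), the same change of variables makes the role of the defining inequalities of $\mathfrak{D}_d$ transparent. Using the two-sided bound $c_1\alpha^n\le U_n\le c_2\alpha^n$, valid for $n\ge 1$ with positive constants since $U_n\sim\alpha^n/(\alpha-\beta)$ and $\alpha>|\beta|$, each factor satisfies $U_{m_k}^{-\sigma_k}\le C_k\,\alpha^{-\sigma_k m_k}$ for a fixed constant $C_k$, the two cases $\sigma_k>0$ and $\sigma_k<0$ being absorbed into $C_k$. Writing $m_k=\sum_{j\le k}a_j$ with $a_j=qn_j+r_j$ and rearranging by Abel summation gives
\[
\sum_{k=1}^{d}\sigma_k m_k=\sum_{j=1}^{d}a_j\Big(\sum_{k=j}^{d}\sigma_k\Big)=\sum_{j=1}^{d}(qn_j+r_j)\,T_j,\qquad T_j:=\sum_{i=j}^{d}\mathrm{Re}(s_i).
\]
The hypothesis $(s_1,\ldots,s_d)\in\mathfrak{D}_d$ is precisely $T_j>0$ for all $j$, so $\alpha^{-qT_j}<1$ and the bound factors as a product of convergent geometric series,
\[
\sum_{n_1,\ldots,n_d\ge 0}\ \prod_{j=1}^{d}\alpha^{-(qn_j+r_j)T_j}=\prod_{j=1}^{d}\frac{\alpha^{-r_jT_j}}{1-\alpha^{-qT_j}}<\infty,
\]
again giving the claim after multiplying by the fixed constant $\prod_k C_k$.

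The only genuine subtlety I anticipate is that individual real parts $\sigma_k$ may be negative on $\mathfrak{D}_d$, so one cannot bound the terms factor by factor in a naive way; this is exactly what the reindexing resolves, trading the $\sigma_k$ for the tail sums $T_j$ that the definition of $\mathfrak{D}_d$ forces to be positive. A secondary point to verify is the positivity and geometric growth of $U_n$ (so that the substituted indices are strictly increasing and the $U_{m_k}$ are nonzero), which is precisely what the standing hypotheses \eqref{eq3b} on $P,Q$, together with $r_k\ge 1$, guarantee.
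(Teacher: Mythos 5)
Your first argument is correct and is essentially the paper's own proof: the paper likewise reduces everything to Proposition \ref{prop1.1} by viewing the shifted series, written in the increment variables, as a termwise-dominated subsum of the multiple Lucas zeta series (this is exactly the content of \eqref{eq3d} and \eqref{eq3c}), and your explicit injectivity remark simply makes that comparison precise. The additional self-contained estimate, via the two-sided geometric bounds on $U_n$ and the tail sums $T_j=\sum_{i\ge j}\mathrm{Re}(s_i)$, is a valid alternative not present in the paper; its merit is that it makes transparent why the defining inequalities of $\mathfrak{D}_d$ involve tail sums rather than the individual real parts, which may be negative.
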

\begin{proof}
From Proposition \ref{prop1.1}, we have 
\begin{equation}\label{eq3d}
\sum_{0<n_1<\cdots < n_d} \Big| \frac{1}{U_{n_1}^{s_1}\cdots U_{n_d}^{s_d}}\Big| \leq \sum_{n_1 = 1}^{\infty} \left| \frac{1}{U_{n_1}^{s_1}} \right|\sum_{n_2 = 1}^{\infty} \left| \frac{1}{U_{n_1 + n_2}^{s_2}} \right| \cdots \sum_{n_d = 1}^{\infty} \left|\frac{1}{U_{n_1 +\cdots + n_d}^{s_d}}\right| < \infty.
\end{equation}
Note that 
\begin{align}\label{eq3c}
\begin{split}
 \sum_{n_1 = 0,\cdots,  n_d = 0}^{\infty} & \Bigg| \frac{1}{U_{qn_1+r_1}^{s_1}} \cdots \frac{1}{U_{q(n_1 + \cdots + n_d)+(r_1+\cdots+r_d)}^{s_d}}\Bigg|\\
 \leq & \sum_{n_1 = 0,\cdots,  n_d = 0}^{\infty} \Bigg| \frac{1}{U_{qn_1+r_1}^{s_1}} \Bigg| \cdots \Bigg|\frac{1}{U_{q(n_1 + \cdots + n_d)+(r_1+\cdots+r_d)}^{s_d}}\Bigg|\\
\leq &\sum_{n_1 = 1}^{\infty} \left| \frac{1}{U_{n_1}^{s_1}} \right|\sum_{n_2 = 1}^{\infty} \left| \frac{1}{U_{n_1 + n_2}^{s_2}} \right| \cdots \sum_{n_d = 1}^{\infty} \left|\frac{1}{U_{n_1 +\cdots + n_d}^{s_d}}\right|  < \infty. 
\end{split}
\end{align}
Hence, the Proposition \ref{prop1}  follows from \eqref{eq3d} and \eqref{eq3c}.
\end{proof}

\begin{proposition}\label{prop2.1}
For a positive integer $d\geq 1,$ let $\chi_1, \ldots, \chi_d$ be the Dirichlet characters of same modulus $q\geq 2$. Then the infinite sum 
\[\sum_{0<n_1<n_2\cdots<n_d}\frac{\chi_1(n_1)}{U_{n_{1}}^{s_1}} \cdots \frac{\chi_d(n_d)}{U_{n_{d}}^{s_d}}\] is absolutely convergent in $\mathfrak{D}_d$. 
\end{proposition}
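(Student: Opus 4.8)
The plan is to dominate the series term-by-term by the character-free multiple Lucas zeta series of Proposition \ref{prop1.1}, whose absolute convergence on $\mathfrak{D}_d$ is already established. The only input specific to Dirichlet characters that I need is the elementary bound $|\chi_i(n)| \leq 1$ for every $i$ and every integer $n$: indeed $\chi_i(n)$ is either $0$, when $\gcd(n,q) > 1$, or a root of unity, when $\gcd(n,q) = 1$, so its modulus never exceeds $1$.

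Given this, I would estimate the absolute value of a general term by
\[
\left| \frac{\chi_1(n_1)}{U_{n_1}^{s_1}} \cdots \frac{\chi_d(n_d)}{U_{n_d}^{s_d}} \right| = |\chi_1(n_1)| \cdots |\chi_d(n_d)| \left| \frac{1}{U_{n_1}^{s_1}} \cdots \frac{1}{U_{n_d}^{s_d}} \right| \leq \left| \frac{1}{U_{n_1}^{s_1}} \cdots \frac{1}{U_{n_d}^{s_d}} \right|,
\]
valid on the simplex $0 < n_1 < \cdots < n_d$. Summing both sides over this index set and invoking Proposition \ref{prop1.1}, the right-hand sum is finite throughout $\mathfrak{D}_d$, so the left-hand sum of absolute values is finite as well. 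This yields absolute convergence on $\mathfrak{D}_d$.

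There is essentially no obstacle here: the result is a direct comparison (domination) argument, mirroring the proof of Proposition \ref{prop1}, where the shift by $r_i$ and the rescaling of the indices by $q$ were absorbed into the same character-free majorant. The only point worth stating explicitly is the uniform bound on the character values, after which absolute convergence transfers immediately from Proposition \ref{prop1.1}.
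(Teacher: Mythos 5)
Your proof is correct and follows essentially the same route as the paper: both arguments reduce the claim to Proposition \ref{prop1.1} by the uniform bound $|\chi_i(n)|\leq 1$, the only cosmetic difference being that the paper first rewrites the simplex sum as nested sums over the increments $n_1, n_2-n_1,\ldots$ before applying the comparison, whereas you dominate the series directly over the index set $0<n_1<\cdots<n_d$. No gap; the explicit remark that character values are zero or roots of unity is a welcome addition the paper leaves implicit.
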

\begin{proof}
Observe that
\begin{align*}
&\sum_{0<n_1<n_2\cdots<n_d} \frac{\chi_1(n_1)}{U_{n_{1}}^{s_1}} \cdots \frac{\chi_d(n_d)}{U_{n_{d}}^{s_d}} =  \sum_{n_1 = 1}^{\infty}\frac{\chi_1(n_1)}{U_{n_1}^{s_1}}\sum_{n_2 = 1}^{\infty} \frac{\chi_2(n_1+n_2)}{U_{n_1 + n_2}^{s_2}} \cdots \sum_{n_d = 1}^{\infty} \frac{\chi_d(n_1 + \cdots+ n_d)}{U_{n_1 +\cdots + n_d}^{s_d}}.
\end{align*}
Since $|\chi(n)| \leq 1$ for any natural number $n$,  by Proposition \ref{prop1.1}, we obtain that
 the series $\mathcal{L}_{U}^d(\bf s\mid \bf \chi)$ converges absolutely in $\mathfrak{D}_d.$ 
\end{proof}
\begin{proposition}\label{prop3}
For a positive integer  $d\geq 1,$ let $f_1, \ldots, f_d$ be  additive characters such that the partial products $\prod_{k=i}^{d}\left|f_{k}(1)\right|\leq 1$ for all $1\leq i\leq d$. Then the infinite sum 
\[\sum_{0<n_1<n_2\cdots<n_d}\frac{f_1(n_1)}{U_{n_{1}}^{s_1}} \cdots \frac{f_d(n_d)}{U_{n_{d}}^{s_d}}\] converges absolutely in $\mathfrak{D}_d$. 
\end{proposition}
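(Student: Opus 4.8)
The plan is to reduce the claim to Proposition \ref{prop1.1} by a domination argument, exactly as in the proof of Proposition \ref{prop2.1}, with the extra care needed because additive characters, unlike Dirichlet characters, need not be bounded by $1$. First I would reindex the nested sum by the ``gap'' variables, writing
\[
\sum_{0<m_1<\cdots<m_d}\frac{f_1(m_1)}{U_{m_1}^{s_1}}\cdots\frac{f_d(m_d)}{U_{m_d}^{s_d}}
=\sum_{n_1=1}^{\infty}\frac{f_1(n_1)}{U_{n_1}^{s_1}}\sum_{n_2=1}^{\infty}\frac{f_2(n_1+n_2)}{U_{n_1+n_2}^{s_2}}\cdots\sum_{n_d=1}^{\infty}\frac{f_d(n_1+\cdots+n_d)}{U_{n_1+\cdots+n_d}^{s_d}},
\]
so that the arguments of $f_k$ and $U$ become the partial sums $n_1+\cdots+n_k$, with each gap variable $n_j\ge 1$.

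Next I would use that each $f_k$ is a group homomorphism, so $f_k(n)=f_k(1)^n$ and hence $|f_k(n)|=|f_k(1)|^n$. Writing $c_k=|f_k(1)|$, which is nonzero since $f_k$ takes values in $\C^{*}$, the product of the numerators in absolute value becomes
\[
\prod_{k=1}^{d} c_k^{\,n_1+\cdots+n_k}
=\prod_{j=1}^{d}\Bigl(\prod_{k=j}^{d}c_k\Bigr)^{n_j},
\]
where the regrouping simply collects, for each gap variable $n_j$, all factors $c_k$ with $k\ge j$. Setting $P_j=\prod_{k=j}^{d}|f_k(1)|$, this equals $\prod_{j=1}^{d}P_j^{\,n_j}$, and these $P_j$ are precisely the tail products appearing in the hypothesis. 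The hypothesis $P_j\le 1$ for all $1\le j\le d$, together with $n_j\ge 1$ and $P_j>0$, then gives $P_j^{\,n_j}\le 1$, so the modulus of each summand is bounded above by
\[
\frac{1}{|U_{n_1}|^{\sigma_1}\,|U_{n_1+n_2}|^{\sigma_2}\cdots|U_{n_1+\cdots+n_d}|^{\sigma_d}},
\qquad \sigma_i=\mathrm{Re}(s_i).
\]

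Finally I would observe that the right-hand side, summed over $n_1,\ldots,n_d\ge 1$, is exactly the character-free dominating series that converges on $\mathfrak{D}_d$ by Proposition \ref{prop1.1} under the same reindexing; hence the original series converges absolutely on $\mathfrak{D}_d$. The only genuine obstacle is the bookkeeping in the regrouping step: one must see that summing the partial sums $n_1+\cdots+n_k$ over $k=1,\ldots,d$ turns $\prod_k c_k^{\,n_1+\cdots+n_k}$ into $\prod_j P_j^{\,n_j}$ with $P_j$ exactly the tail product $\prod_{k=j}^{d}|f_k(1)|$. This is what makes the partial-product hypothesis the natural and sharp condition: it is precisely what keeps each gap variable's contribution bounded by $1$, allowing the series to be dominated by the series of Proposition \ref{prop1.1}.
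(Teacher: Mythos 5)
Your proposal is correct and follows essentially the same route as the paper's own proof: both reindex by the gap variables $n_j$, use $f_k(n)=f_k(1)^n$ to regroup the numerators into the tail products $\prod_{k=j}^{d}f_k(1)$ raised to the power $n_j$, and then invoke the hypothesis $\prod_{k=j}^{d}|f_k(1)|\leq 1$ to dominate the series by the character-free series of Proposition \ref{prop1.1}. No meaningful difference in method or content.
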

\begin{proof}
Note that
\begin{align*}
\sum_{0<n_1<n_2\cdots<n_d} & \frac{f_1(n_1)}{U_{n_{1}}^{s_1}} \cdots \frac{f_d(n_d)}{U_{n_{d}}^{s_d}} =  \sum_{n_1 = 1}^{\infty}\frac{f_1(n_1)}{U_{n_1}^{s_1}}\sum_{n_2 = 1}^{\infty} \frac{f_2(n_1+n_2)}{U_{n_1 + n_2}^{s_2}} \cdots \sum_{n_d = 1}^{\infty} \frac{f_d(n_1 + \cdots+ n_d)}{U_{n_1 +\cdots + n_d}^{s_d}}\\
& = \sum_{n_1 = 1}^{\infty}\frac{f_1(1)^{n_1}}{U_{n_1}^{s_1}}\sum_{n_2 = 1}^{\infty} \frac{f_2(1)^{n_1+n_2}}{U_{n_1 + n_2}^{s_2}} \cdots \sum_{n_d = 1}^{\infty} \frac{f_d(1)^{n_1+ \cdots +n_d}}{U_{n_1 +\cdots + n_d}^{s_d}}\\
& = \sum_{n_1 = 1}^{\infty}\frac{\prod_{k=1}^{d}f_{k}(1)^{n_1}}{U_{n_1}^{s_1}}\sum_{n_2 = 1}^{\infty} \frac{\prod_{k=2}^{d}f_{k}(1)^{n_2}}{U_{n_1 + n_2}^{s_2}} \cdots \sum_{n_d = 1}^{\infty} \frac{f_{d}(1)^{n_d}}{U_{n_1 +\cdots + n_d}^{s_d}}.
\end{align*}
Since $\prod_{k=i}^{d}\left|f_{k}(1)\right|\leq 1$ for all $1\leq i\leq d,$ by using Proposition \ref{prop1.1}, we have 
\begin{align*}
\sum_{0<n_1<n_2\cdots<n_d}  \left | \frac{f_1(n_1)}{U_{n_{1}}^{s_1}} \cdots \frac{f_d(n_d)}{U_{n_{d}}^{s_d}} \right | 
  \leq \sum_{n_1 = 1}^{\infty} \left| \frac{1}{U_{n_1}^{s_1}} \right|\sum_{n_2 = 1}^{\infty} \left| \frac{1}{U_{n_1 + n_2}^{s_2}} \right| \cdots \sum_{n_d = 1}^{\infty} \left|\frac{1}{U_{n_1 +\cdots + n_d}^{s_d}}\right | < \infty.
\end{align*}
This completes the proof of the Proposition \ref{prop3}.
\end{proof}

\section{Analytic continuation of multiple Lucas Dirichlet series associated to additive and Dirichlet characters}\label{sec3}
From here onward, we use the following notations: 
\[\mbox{Re}(s_i) = \sigma_i (1\leq i \leq d), \quad \ell_{Q, k_j, \ldots, k_d} = 
\begin{cases}
    0      & \quad \text{if } Q>0\\
    k_j + \cdots + k_d & \quad \text{if } Q <0.
  \end{cases}\]
\begin{theorem}\label{thm1}
  The multiple shifted Lucas zeta function $\zeta_{U}^{d}(\bf s\mid \bf r)$ 
of depth $d$ can be  meromorphically continued on all of $\mathbb{C}^d$ with exact list of poles on the hyper planes 
\[s_j + \cdots + s_d = -2(k_j+\cdots + k_d) + (k_j + \cdots + k_d) \frac{\log |Q|}{\log \alpha} + \left(\frac{2n}{q} + \ell_{Q, k_j, \ldots, k_d}  \right) \frac{\pi i }{\log \alpha} \]
for $1\leq j \leq d$ where $ k_j, n \in \mathbb{Z} $ with $ k_j \geq 0$. 
\end{theorem}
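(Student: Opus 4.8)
The plan is to expand each Lucas factor as a geometric-type series in the small ratio $\theta:=\beta/\alpha$ and then sum the resulting multiple geometric series explicitly, so that the continuation can be read off from a closed form. Conditions \eqref{eq3b} force $D=P^2-4Q\ge 0$, hence real roots with $\alpha$ the dominant one, $\alpha>1$ and $|\beta|<\alpha$, so that $|\theta|=|Q|/\alpha^2<1$. Writing $U_n=\tfrac{\alpha^n}{\alpha-\beta}(1-\theta^n)$ and using the binomial series $(1-x)^{-s}=\sum_{k\ge 0}\binom{s+k-1}{k}x^k$, valid since $|\theta^{\,qN_j+R_j}|<1$, I would expand, for each $1\le j\le d$,
\[
U_{qN_j+R_j}^{-s_j}=(\alpha-\beta)^{s_j}\sum_{k_j\ge 0}\binom{s_j+k_j-1}{k_j}\,\alpha^{-(qN_j+R_j)s_j}\,\theta^{(qN_j+R_j)k_j},
\]
where $N_j=n_1+\cdots+n_j$ and $R_j=r_1+\cdots+r_j$. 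Throughout, the absolute convergence on $\mathfrak{D}_d$ established in Proposition \ref{prop1} licenses every rearrangement below.

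Next I would substitute these expansions into \eqref{eq9}, interchange the (independent) sums over $\mathbf{n}=(n_1,\dots,n_d)$ and over $\mathbf{k}=(k_1,\dots,k_d)$, and collect the $\mathbf{n}$-dependence. Setting $S_i=s_i+\cdots+s_d$ and $K_i=k_i+\cdots+k_d$, the purely algebraic regrouping $\prod_{j}(\alpha^{-s_j}\theta^{k_j})^{qN_j}=\prod_{i}(\alpha^{-S_i}\theta^{K_i})^{qn_i}$ (from $N_j=\sum_{i\le j}n_i$) makes the $\mathbf{n}$-sum factor into $d$ independent geometric series $\sum_{n_i\ge 0}(\alpha^{-qS_i}\theta^{qK_i})^{n_i}$, each convergent on $\mathfrak{D}_d$. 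Evaluating them yields the key representation
\[
\zeta_{U}^{d}(\mathbf{s}\mid\mathbf{r})=\Big(\prod_{j=1}^{d}(\alpha-\beta)^{s_j}\alpha^{-R_j s_j}\Big)\sum_{\mathbf{k}\ge 0}\Big(\prod_{j=1}^{d}\binom{s_j+k_j-1}{k_j}\theta^{R_jk_j}\Big)\prod_{i=1}^{d}\frac{1}{1-\alpha^{-qS_i}\theta^{qK_i}}.
\]
The prefactor is entire in $\mathbf{s}$, and for each fixed $\mathbf{k}$ the product $\prod_i(1-\alpha^{-qS_i}\theta^{qK_i})^{-1}$ is meromorphic on $\mathbb{C}^d$.

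To locate the poles I would solve $\alpha^{-qS_i}\theta^{qK_i}=1$. Writing $\theta=(\operatorname{sign}\theta)\,|Q|\alpha^{-2}$ and taking logarithms, the factor $(-1)^{qK_i}$, present exactly when $Q<0$, contributes an extra $\pi i\,qK_i$ to the exponent; this is precisely the role of $\ell_{Q,k_i,\dots,k_d}$. Solving for $S_i$ then gives $s_i+\cdots+s_d=-2K_i+K_i\frac{\log|Q|}{\log\alpha}+\big(\frac{2n}{q}+\ell_{Q,k_i,\dots,k_d}\big)\frac{\pi i}{\log\alpha}$ with $n\in\mathbb{Z}$ (using that $n$ ranges over all of $\mathbb{Z}$ to absorb signs), which is exactly the asserted family of hyperplanes with $j=i$. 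Hence, for each $\mathbf{k}$, the meromorphic summand has poles only on these hyperplanes.

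The remaining, and main, step is to show that the series over $\mathbf{k}$ defines a meromorphic function on all of $\mathbb{C}^d$ with no poles beyond this union. On a compact set $K\subset\mathbb{C}^d$ avoiding a neighbourhood of the hyperplanes I would bound $\prod_i|1-\alpha^{-qS_i}\theta^{qK_i}|^{-1}$ uniformly in $\mathbf{k}$: since $|\theta|<1$, for large $K_i$ each factor tends to $1$ uniformly on $K$, while each of the finitely many remaining values of $K_i$ gives a factor bounded away from its zeros on $K$, so a single positive lower bound works for all $\mathbf{k}$. Combined with the polynomial growth of $|\binom{s_j+k_j-1}{k_j}|$ in $k_j$ and the geometric decay of $|\theta^{R_jk_j}|$, the $\mathbf{k}$-series then converges locally uniformly off the hyperplanes and is holomorphic there; patching over an exhaustion by such compacta gives meromorphic continuation to $\mathbb{C}^d$ with poles contained in the stated set. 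The hardest part is exactly this uniform control of the geometric factors near, but off, the pole set — in particular the $\mathbf{k}$-uniform lower bound for $|1-\alpha^{-qS_i}\theta^{qK_i}|$ — since the same hyperplane can arise from infinitely many $\mathbf{k}$; that the list is \emph{exact} (each hyperplane genuinely carries a pole, with no global cancellation) is then confirmed by the residue computation of Section \ref{sec4}.
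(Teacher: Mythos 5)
Your proposal is correct and follows essentially the same route as the paper: your expansion in $\theta=\beta/\alpha$ with coefficients $\binom{s_j+k_j-1}{k_j}$ is exactly the paper's expansion \eqref{eq3.1} in disguise (since $\binom{s+k-1}{k}=(-1)^k\binom{-s}{k}$ and $\theta^{mk}=Q^{mk}\alpha^{-2mk}$), your closed form coincides with \eqref{eq3.3}, and the polar hyperplanes are read off from the same vanishing denominators. The only difference is that you make explicit the $\mathbf{k}$-uniform lower bound on the denominators and the resulting locally uniform convergence off the polar set, a point the paper's proof leaves implicit.
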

\begin{proof} Since $\alpha - \beta = \sqrt{D}$ and $ \alpha \beta = Q,$ for any $z \in \mathbb{C},$ we have
\begin{align}\label{eq3.1}
\begin{split}
U_{qn+r} ^{z} =\left(\frac{\alpha^{qn+r} - \beta^{qn+r}}{\alpha - \beta}\right)^z = 
D^{-z/2} \sum_{k = 0} ^{\infty}\binom{z}{k}(-1)^{k} Q^{(qn+r)k}\alpha^{(qn+r)(z-2k)}.
\end{split}
\end{align}
Since the series $\zeta_U^d(\bf s\mid \bf r)$ is absolutely convergent, by interchanging the summation,  we have 
\begin{align}\label{eq3.3}
\zeta_U^d(\bf s\mid \bf r) \nonumber
=  & D^{(s_1 +\cdots+ s_d)/2} \sum_{k_1 =0}^{\infty}\binom{-s_1}{k_1} (-1)^{k_1} \cdots  \sum_{k_d =0}^{\infty}\binom{-s_d}{k_d} (-1)^{k_d} \\\nonumber
&\times Q^{(k_1+ \cdots k_d)r_1}\left( \alpha^{-(s_1 + \cdots +s_d + 2(k_1 + \cdots +k_d))}\right)^{r_1}\cdots Q^{k_dr_d}\left(\alpha^{-(s_d + 2k_d)} \right)^{r_d}\\\nonumber
&\sum_{n_1, \ldots, n_d = 0}^{\infty} 
Q^{(k_1+ \cdots k_d)(qn_1)}\left( \alpha^{-q(s_1 + \cdots +s_d + 2(k_1 + \cdots +k_d))}\right)^{n_1} \cdots  Q^{k_dqn_d}\left(\alpha^{-q(s_d + 2k_d)} \right)^{n_d}\\ \nonumber
&= D^{(s_1 +\cdots+ s_d)/2} \sum_{k_1 =0}^{\infty}\binom{-s_1}{k_1} (-1)^{k_1} \cdots  \sum_{k_d =0}^{\infty}\binom{-s_d}{k_d} (-1)^{k_d} \\
&\times \frac{Q^{(k_1+ \cdots k_d)r_1} \alpha^{-r_1(s_1 + \cdots +s_d + 2(k_1 + \cdots +k_d))}}{1 -  Q^{q(k_1+\cdots + k_d)}\alpha^{-q(s_1 +\cdots + s_d + 2(k_1 + \cdots+k_d))}}  \cdots \frac{ Q^{k_dr_d}\alpha^{-r_d(s_d + 2k_d)}}{(1- Q^{qk_d}\alpha^{-q(s_d + 2k_d})} \\ \nonumber
&= D^{(s_1 +\cdots+ s_d)/2} \sum_{k_1 =0}^{\infty}\binom{-s_1}{k_1} (-1)^{k_1} \cdots  \sum_{k_d =0}^{\infty}\binom{-s_d}{k_d} (-1)^{k_d} \\\nonumber
&\times \frac{Q^{k_1r_1} \alpha^{-r_1(s_1 + 2k_1)}}{1 -  Q^{q(k_1+\cdots + k_d)}\alpha^{-q(s_1 +\cdots + s_d + 2(k_1 + \cdots+k_d))}}  \cdots \frac{ Q^{k_d(r_1+\cdots + r_d)}\alpha^{-(r_1+\cdots + r_d)(s_d + 2k_d)}}{(1- Q^{qk_d}\alpha^{-q(s_d + 2k_d})}. 
\end{align}
This infinite series is a holomorphic function on  $\mathbb{C}^d$ except for the poles derived from  the functions
\[ g_{k_j, \ldots, k_d}(s_j, \ldots, s_d) := \frac{Q^{r_j(k_j+\cdots + k_d)}\alpha^{-r_j(s_j + \cdots +s_d + 2(k_j + \cdots +k_d))}}{1 -  Q^{q(k_j+\cdots + k_d)}\alpha^{-q(s_j +\cdots + s_d + 2(k_j + \cdots+k_d))}},\]
 for $1\leq j \leq d$. Hence  $\zeta_U^d(\bf s\mid {\bf r})$ meromorphically continued on $\mathbb{C}^d$ with poles on the hyperplanes for
$1\leq j \leq d$
\begin{equation}\label{eq3.6}
s_j + \cdots + s_d = -2(k_j+\cdots + k_d) + (k_j + \cdots + k_d) \frac{\log |Q|}{\log \alpha} + \left(\frac{2n}{q} + \ell_{Q, k_j, \ldots, k_d}  \right) \frac{\pi i }{\log \alpha}
\end{equation}
where $ k_j, n \in \mathbb{Z} $ with $ k_j \geq 0$ and 
\[\ell_{Q, k_j, \ldots, k_d} = 
\begin{cases}
    0      & \quad \text{if } Q>0\\
    k_j + \cdots + k_d & \quad \text{if } Q <0.
  \end{cases}\] 
  In fact, these poles are simple.
 \end{proof} 
 
Observe that 
 \begin{align}\label{eq3.6d}
 \begin{split}
& \mathcal{L}_{U}^d({\bf s}\mid {\bf \chi}) =  \sum_{n_1 = 1}^{\infty}\frac{\chi_1(n_1)}{U_{n_1}^{s_1}}\sum_{n_2 = 1}^{\infty} \frac{\chi_2(n_1+n_2)}{U_{n_1 + n_2}^{s_2}} \cdots \sum_{n_d = 1}^{\infty} \frac{\chi_d(n_1 + \cdots+ n_d)}{U_{n_1 +\cdots + n_d}^{s_d}}\\
& = \sum_{r_1 = 1}^{q} \sum_{r_2 = 1}^{q}\cdots \sum_{r_d = 1}^{q} \sum_{n_1 =1}^{\infty}\frac{\chi_1(r_1)}{U_{qn_1 + r_1}^{s_1}}\cdots \sum_{n_d = 1}^{\infty} \frac{\chi_d(r_1+\cdots+ r_d)}{U_{q(n_1 + \cdots+n_d) + r_1 +\cdots r_d}^{s_d}} \\
& = \sum_{r_1 = 1}^{q}\sum_{r_2 = 1}^{q}\cdots \sum_{r_d=1}^{q} \chi_1(r_1) \chi_{2}(r_1 + r_2)\cdots \chi_d(r_1+\cdots + r_d) \zeta_{U}^{d}(\bf s\mid \bf r). 
\end{split}
\end{align}
Thus the function $\mathcal{L}_{U}^{d}(\bf s\mid \bf \chi)$ is a linear combination of the $\zeta_{U}^{d}(\bf s\mid \bf r)$. Therefore, we have the following result.
 
 \begin{theorem}\label{thm2}
For any positive integer $d\geq 1,$ let $\chi_1, \ldots, \chi_d$ be the Dirichlet characters of same modulus $q\geq 2$, then the multiple Lucas $L$-function $\mathcal{L}_{U}^{d}(\bf s\mid  {\bf \chi})$ 
of depth $d$ can be meromorphically continued on all of $\mathbb{C}^d$ with possible poles on the hyper planes given by \eqref{eq3.6}.
\end{theorem}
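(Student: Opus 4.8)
The plan is to obtain Theorem \ref{thm2} as a direct consequence of the decomposition \eqref{eq3.6d} together with the meromorphic continuation already established in Theorem \ref{thm1}. The essential input is in hand: identity \eqref{eq3.6d} exhibits the multiple Lucas $L$-function as a finite $\mathbb{C}$-linear combination
\[\mathcal{L}_{U}^{d}({\bf s}\mid {\bf \chi}) = \sum_{r_1 = 1}^{q}\cdots \sum_{r_d=1}^{q} c_{{\bf r}}\, \zeta_{U}^{d}({\bf s}\mid {\bf r}),\]
where the coefficients $c_{{\bf r}} = \chi_1(r_1)\chi_2(r_1+r_2)\cdots\chi_d(r_1+\cdots+r_d)$ are constants independent of ${\bf s}$, and the outer summation ranges over the finite index set $\{1,\ldots,q\}^d$. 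Since both the number of terms and the coefficients are fixed, the whole problem reduces to continuing each summand and tracking the union of their poles.

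First I would invoke Theorem \ref{thm1} to continue every $\zeta_{U}^{d}({\bf s}\mid {\bf r})$ meromorphically to all of $\mathbb{C}^d$. The key observation is that the polar hyperplanes \eqref{eq3.6} produced there are \emph{independent} of the shift vector ${\bf r}$: their defining equations involve only $q$, $Q$, $\alpha$, $D$ and the integer parameters $k_j,n$, whereas ${\bf r}$ enters only through the numerators $Q^{r_j(k_j+\cdots+k_d)}\alpha^{-r_j(s_j+\cdots+s_d+2(k_j+\cdots+k_d))}$ of the functions $g_{k_j,\ldots,k_d}$, and never through the vanishing locus of their denominators. Hence each of the finitely many functions $\zeta_{U}^{d}({\bf s}\mid {\bf r})$ is holomorphic off one and the same family of hyperplanes \eqref{eq3.6}.

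A finite sum of meromorphic functions with constant coefficients is again meromorphic, and its polar set is contained in the union of the polar sets of its summands. Applying this to \eqref{eq3.6d}, the function $\mathcal{L}_{U}^{d}({\bf s}\mid {\bf \chi})$ continues meromorphically to $\mathbb{C}^d$ with possible poles lying on the hyperplanes \eqref{eq3.6}, which is exactly the assertion of Theorem \ref{thm2}.

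The only delicate point — and the reason the statement records \emph{possible} rather than \emph{exact} poles — is that forming the character-weighted sum can induce cancellation. On a given hyperplane the residue of $\mathcal{L}_{U}^{d}({\bf s}\mid {\bf \chi})$ is the $c_{{\bf r}}$-weighted sum of the simple residues of the individual $\zeta_{U}^{d}({\bf s}\mid {\bf r})$, and for suitable characters this weighted sum may vanish, annihilating a pole present in each separate term. Because the present statement only claims containment of the polar set in \eqref{eq3.6}, I would not attempt to decide which poles actually survive at this stage; that finer residue computation is precisely what is deferred to Section \ref{sec4}. Thus no real obstacle remains here: the argument is a formal corollary of Theorem \ref{thm1} and linearity, with the entire subtlety isolated in the word \emph{possible}.
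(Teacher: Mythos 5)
Your proposal is correct and takes essentially the same route as the paper: the paper also obtains Theorem \ref{thm2} as an immediate corollary of the finite linear-combination identity \eqref{eq3.6d} together with the meromorphic continuation of each $\zeta_{U}^{d}({\bf s}\mid {\bf r})$ from Theorem \ref{thm1}. Your added observations --- that the hyperplanes \eqref{eq3.6} are independent of ${\bf r}$, and that character-induced cancellation of residues is why only \emph{possible} poles are claimed --- are correct refinements of points the paper leaves implicit.
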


 Now, we discuss the analytic continuation of the multiple Lucas function associated with  additive characters 
$\mathcal{L}_{U}^{d}(\bf s\mid \bf f)$.
 \begin{theorem}\label{thm3}
For any positive integer $d\geq 1,$ let $f_1, \ldots, f_d$ be the  additive characters with $\prod_{k=i}^{d}\left|f_{k}(1)\right|\leq 1$ for all $1\leq i\leq d$, then the multiple additive Lucas $L$-function $\mathcal{L}_{U}^{d}(\bf s\mid \bf f)$ 
of depth $d$ can be continued to a meromorphic function on all of $\mathbb{C}^d$ with possible poles on the hyper planes 
\begin{align}\label{al3.5}
\begin{split}
s_j + \cdots + s_d &= -2(k_j+\cdots + k_d) + \frac{\log (f_j(1)\cdots f_d(1))}{\log \alpha}\\
& + (k_j + \cdots + k_d) \frac{\log |Q|}{\log \alpha} + \left(2n + \ell_{Q, k_j, \ldots, k_d}  \right) \frac{\pi i }{\log \alpha} 
\end{split}
\end{align}
for $1\leq j \leq d$ where $ k_j, n \in \mathbb{Z} $ with $ k_j \geq 0$.
\end{theorem}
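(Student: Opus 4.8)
The plan is to mirror the proof of Theorem \ref{thm1}, now carrying the additive factors $f_i(1)^{n_i}$ through the entire computation. I would begin from the product form of $\mathcal{L}_U^d(\mathbf{s}\mid\mathbf{f})$ established in the proof of Proposition \ref{prop3}, which is legitimate since the hypothesis $\prod_{k=i}^d|f_k(1)|\leq 1$ places us, via that proposition, in the region $\mathfrak{D}_d$ of absolute convergence. To each factor $U_{n_i}^{-s_i}$ I would apply the expansion \eqref{eq3.1} (with $z=-s_i$ and the summation index $n_i$ playing the role of $qn+r$), writing $U_{n_i}^{-s_i}=D^{s_i/2}\sum_{k_i\geq 0}\binom{-s_i}{k_i}(-1)^{k_i}Q^{n_ik_i}\alpha^{-n_i(s_i+2k_i)}$. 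Absolute convergence then permits interchanging the order of summation.

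Next I would pass to the consecutive-difference variables $m_1=n_1$ and $m_j=n_j-n_{j-1}$ for $2\leq j\leq d$, each ranging over the positive integers, so that $n_i=m_1+\cdots+m_i$. Collecting the powers of $\alpha$, of $Q$, and the additive characters $f_i(1)^{n_i}$ according to which $m_j$ they attach to, the coefficient of $m_j$ involves precisely the grouped quantities $s_j+\cdots+s_d$, $k_j+\cdots+k_d$, and the partial product $\prod_{i=j}^d f_i(1)$. Each inner sum over $m_j\geq 1$ is then a geometric series with common ratio
\[
\Big(\prod_{i=j}^d f_i(1)\Big)\,Q^{k_j+\cdots+k_d}\,\alpha^{-\left((s_j+\cdots+s_d)+2(k_j+\cdots+k_d)\right)}.
\]
Summing these series yields, for each $1\leq j\leq d$, a denominator $1-\big(\prod_{i=j}^d f_i(1)\big)Q^{k_j+\cdots+k_d}\alpha^{-((s_j+\cdots+s_d)+2(k_j+\cdots+k_d))}$. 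Exactly as in Theorem \ref{thm1}, I would argue that the remaining multiple series in $k_1,\ldots,k_d$ defines a holomorphic function on $\mathbb{C}^d$, so that the continuation's only poles come from the vanishing of these $d$ denominators.

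To locate those poles I would solve each denominator equation. Writing $Q^{k_j+\cdots+k_d}=|Q|^{k_j+\cdots+k_d}e^{i\pi\ell_{Q,k_j,\ldots,k_d}}$ to absorb the sign of $Q$, and noting that each $f_i(1)\in\mathbb{C}^*$ so that $\log\big(\prod_{i=j}^d f_i(1)\big)$ is defined up to the integer ambiguity $2\pi i n$, the equation $\big(\prod_{i=j}^d f_i(1)\big)Q^{k_j+\cdots+k_d}\alpha^{-((s_j+\cdots+s_d)+2(k_j+\cdots+k_d))}=1$ rearranges, upon taking complex logarithms and dividing by $\log\alpha$, into the hyperplanes \eqref{al3.5}. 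The main point to watch — and the only genuine difference from Theorem \ref{thm1} — is the bookkeeping of two new features: the factor $\prod_{i=j}^d f_i(1)$ that survives in each geometric ratio, contributing the term $\log(f_j(1)\cdots f_d(1))/\log\alpha$; and the absence of the modulus-$q$ refinement present in \eqref{eq3.6d}, so that here the ratio carries $\alpha^{-(s_j+\cdots+s_d+\cdots)}$ rather than its $q$-th power, which is exactly why the $2n/q$ of \eqref{eq3.6} is replaced by $2n$ in \eqref{al3.5}.
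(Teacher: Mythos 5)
Your proposal is correct and follows essentially the same route as the paper's own proof: expand each factor $U_{n_i}^{-s_i}$ via \eqref{eq3.1}, interchange summation using the absolute convergence from Proposition \ref{prop3}, sum the resulting geometric series in the difference variables to obtain denominators $1-\bigl(\prod_{i=j}^{d}f_i(1)\bigr)Q^{k_j+\cdots+k_d}\alpha^{-((s_j+\cdots+s_d)+2(k_j+\cdots+k_d))}$, and read off the hyperplanes \eqref{al3.5} from their zeros. Your additional remarks on the branch ambiguity of $\log\bigl(\prod_{i=j}^{d}f_i(1)\bigr)$ and on why $2n/q$ in \eqref{eq3.6} becomes $2n$ here are accurate and, if anything, slightly more explicit than the paper's treatment.
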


\begin{proof}
 From \eqref{eq3.1},  for any $z \in \mathbb{C},$ we have
\[f(1)^{n} U_{n} ^{z}  = D^{-z/2} \sum_{k = 0} ^{\infty}\binom{z}{k}(-1)^{k} f(1)^{n} Q^{nk}\alpha^{n(z-2k)}.\]
Since, this series $ \mathcal{L}_{U}^{d}({\bf s}\mid {\bf f})$  is absolutely converges for a fixed point $(s_1, \ldots, s_d)$ in $\mathfrak{D}_d$,  Therefore, by interchanging the summation, we have 
\begin{align}\label{eq3.7}
\mathcal{L}_{U}^{d}({\bf s}\mid {\bf f})  &= D^{(s_1 +\cdots+ s_d)/2} \sum_{k_1 =0}^{\infty}\binom{-s_1}{k_1} (-1)^{k_1} \cdots  \sum_{k_d =0}^{\infty}\binom{-s_d}{k_d} (-1)^{k_d} \\ \nonumber
&\sum_{n_1, \ldots, n_d = 1}^{\infty} 
(f_1(1)\cdots f_d(1))^{n_1} Q^{(k_1+ \cdots k_d)n_1}\left( \alpha^{-(s_1 + \cdots +s_d + 2(k_1 + \cdots +k_d))}\right)^{n_1} \\&\times \cdots  \times ( f_d(1))^{n_d}Q^{k_dn_d}\left(\alpha^{-(s_d + 2k_d)} \right)^{n_d}.  \nonumber
\end{align}
Note that for any $(k_1, \ldots, k_d) \in \mathbb{Z}_{\geq 0}^{d},$ we have
\begin{align}\label{eq3.8}
\begin{split}
&\sum_{n_1, \ldots, n_d = 1}^{\infty}  \left(\frac{f_1(1)\cdots f_d(1)Q^{(k_1+ \cdots k_d)}}{\alpha^{s_1 + \cdots +s_d + 2(k_1 + \cdots +k_d)}}\right)^{n_1} \cdots  \left(\frac{f_d(1)Q^{k_d}}{\alpha^{s_d + 2k_d}} \right)^{n_d}\\
& =  \frac{f_1(1)\cdots f_d(1)Q^{k_1+\cdots + k_d} \alpha^{-(s_1 + \cdots + s_d + 2(k_1 +\cdots + k_d))}}{(1 -  f_1(1)\cdots f_d(1)Q^{k_1+\cdots + k_d}\alpha^{-(s_1 +\cdots + s_d + 2(k_1 + \cdots+k_d))})}  \cdots \frac{ f_d(1)Q^{k_d}\alpha^{-(s_d + 2k_d)}}{(1- f_d(1)Q^{k_d}\alpha^{-(s_d + 2k_d})}\\
& = \frac{f_1(1)\cdots f_d(1)Q^{k_1+\cdots + k_d}}{\alpha^{s_1 + \cdots + s_d + 2(k_1 +\cdots + k_d)} - f_1(1)\cdots f_d(1)Q^{k_1 + \cdots  +k_d}} \cdots  \frac{f_d(1)Q^{k_d} }{\alpha^{s_d + 2k_d} - f_d(d)Q^{k_d}}.
\end{split}
\end{align}
Using \eqref{eq3.8} in \eqref{eq3.7}, we obtain
\begin{align}\label{eq3.9}
\begin{split}
&\mathcal{L}_{U}^{d}({\bf s}\mid {\bf f}) = D^{\frac{s_1 +\cdots+ s_d}2} \sum_{k_1 =0}^{\infty}\binom{-s_1}{k_1} (-1)^{k_1} \cdots  \sum_{k_d =0}^{\infty}\binom{-s_d}{k_d} (-1)^{k_d}\\
& \frac{f_1(1)\cdots f_d(1)Q^{k_1+\cdots + k_d}}{\alpha^{s_1 + \cdots + s_d + 2(k_1 +\cdots + k_d)} - f_1(1)\cdots f_d(1)Q^{k_1 + \cdots  +k_d}} \cdots  \frac{f_d(1)Q^{k_d} }{\alpha^{s_d + 2k_d} - f_d(1)Q^{k_d}}.
\end{split}
\end{align}
The infinite series in \eqref{eq3.9} is a holomorphic function on  $\mathbb{C}^d$ except the poles derived from 
 the functions
\[ h_{k_j, \ldots, k_d}(s_j, \ldots, s_d) := \frac{f_j(1) \cdots f_d(1)Q^{k_j+\cdots + k_d }}{\alpha^{s_j+ \cdots + s_d + 2(k_j+\cdots + k_d)} - f_j(1)\cdots f_d(1)Q^{k_j + \cdots  +k_d}}.\]
\end{proof}

\section{Poles and residues of multiple Lucas Dirichlet series associated to additive and Dirichlet characters}\label{sec4}
For $1 \leq j \leq d$, we define 
\begin{align*}
s_d(j):= s_j &+ \cdots + s_d, \quad k_d(j) := k_j + \cdots + k_d,\;  k_d'(j) := k_j' + \cdots + k_d'   \\
& r_d(j):= r_j + \cdots + r_d, \quad  g_{ j}^{d} := \prod_{k= j}^{d} f_k,\quad \mbox{and} \quad \zeta_q = e^{\frac{2\pi i}{q}}
\end{align*}
with assumption that $s_d(d+1) =0, \;  k_d(d+1) =0,\; g_d^d =f_d \quad \hbox{and} \quad  k_d'(d+1) =0$.

\subsection{Residues of $\mathcal{L}_U^d({\bf s}\mid {\bf \chi})$}

The residue of the multiple Lucas $L$-function $\mathcal{L}_U^d({\bf s}\mid \chi)$ along these hyperplanes \eqref{eq3.6} is defined to be the restriction  of the meromorphic function
$$\left( s_d(j) + 2k_d(j) - k_d(j) \frac{\log |Q|}{\log \alpha} - \left(\frac{2n}{q} + \ell_{Q, k_j,\ldots,k_d}  \right) \frac{\pi i }{\log \alpha} \right)\mathcal{L}_U^d({\bf s} \mid \chi)$$ to the hyperplanes \eqref{eq3.6}.
 
 \begin{theorem}\label{th5.3}
 For a non-negative integer $k_d',$ let $a_d:= -2k_d'+ k_d' \frac{\log |Q|}{\log \alpha} + \left(\frac{2n}{q} + \ell_{Q, k_d'(d)}  \right) \frac{\pi i }{\log \alpha}$ and set $\zeta_U^{d-1}({\bf s}\mid {\bf r}) = 1$ for $d =1$. Then,
 \begin{align*}
 \underset{s_d = a_d}{\mathrm{Res}} \mathcal{L}_U^d( {\bf s }\mid \chi) &= \sum_{r_1 = 1}^{q }\chi_1(r_1) \sum_{r_2 = 1}^{q}\chi_{2}(r_2(1))\cdots \sum_{r_d=1}^{q}\chi_d(r_d(1))\\
& \times \zeta_U^{d-1}(s_1, \cdots, s_{d-1}\mid r_1, \cdots, r_{d-1}) D^{\frac{a_d}2} \binom{- a_d}{k_d'} \frac{(-1)^{k_d'} }{q\log \alpha}\zeta_{q}^{-nr_d(1)}. 
 \end{align*}
 \end{theorem}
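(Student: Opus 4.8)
The plan is to reduce everything to a single-variable residue of the shifted zeta function and then to recognise the outcome as a lower-depth shifted zeta function. By \eqref{eq3.6d} the $L$-function $\mathcal{L}_U^d({\bf s}\mid\chi)$ is a finite linear combination of the functions $\zeta_U^d({\bf s}\mid{\bf r})$ with the ${\bf s}$-independent weights $\chi_1(r_1)\chi_2(r_2(1))\cdots\chi_d(r_d(1))$. Since the residue along $s_d=a_d$ is a linear functional, it suffices to prove the per-${\bf r}$ identity
\[
\mathrm{Res}_{s_d=a_d}\zeta_U^d({\bf s}\mid{\bf r})=\zeta_U^{d-1}(s_1,\dots,s_{d-1}\mid r_1,\dots,r_{d-1})\,D^{a_d/2}\binom{-a_d}{k_d'}\frac{(-1)^{k_d'}}{q\log\alpha}\,\zeta_q^{-nr_d(1)},
\]
and then to multiply by the weights and sum over $r_1,\dots,r_d$ to obtain the statement verbatim.

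To compute this residue I would start from the explicit continuation \eqref{eq3.3}, in which the only poles arise from the product of the $d$ geometric denominators $1-Q^{qk_d(i)}\alpha^{-q(s_d(i)+2k_d(i))}$, $1\le i\le d$. The hyperplane $s_d=a_d$ is the case $j=d$ of \eqref{eq3.6}, so for generic $s_1,\dots,s_{d-1}$ it is met only by the last $(i=d)$ denominator, and this forces the summation index to the prescribed value $k_d=k_d'$ and selects the integer $n$. Because this zero is simple (as recorded at the end of the proof of Theorem \ref{thm1}), the residue of $\big(1-Q^{qk_d'}\alpha^{-q(s_d+2k_d')}\big)^{-1}$ at $s_d=a_d$ equals $1/\phi'(a_d)$ with $\phi(s_d)=1-Q^{qk_d'}\alpha^{-q(s_d+2k_d')}$; differentiating and inserting the pole relation $Q^{qk_d'}\alpha^{-q(a_d+2k_d')}=1$ collapses $\phi'(a_d)$ to $q\log\alpha$, which supplies the factor $1/(q\log\alpha)$.

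It then remains to evaluate the rest of \eqref{eq3.3} at $s_d=a_d$, $k_d=k_d'$. The numerator of the $i=d$ factor is $Q^{k_d'r_d(1)}\alpha^{-r_d(1)(a_d+2k_d')}=\big(Q^{k_d'}\alpha^{-(a_d+2k_d')}\big)^{r_d(1)}$, and the same pole relation, applied factor by factor to the remaining denominators $i<d$, rewrites each of them as $1-Q^{qk_{d-1}(i)}\alpha^{-q(s_{d-1}(i)+2k_{d-1}(i))}$, i.e.\ exactly the denominators occurring in the expansion of $\zeta_U^{d-1}(s_1,\dots,s_{d-1}\mid r_1,\dots,r_{d-1})$. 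The numerators with $i<d$ do not involve $s_d$ or $k_d$, so together with the residual binomial sums over $k_1,\dots,k_{d-1}$ and the split $D^{(s_1+\cdots+s_d)/2}=D^{a_d/2}D^{(s_1+\cdots+s_{d-1})/2}$ they reassemble $\zeta_U^{d-1}$; the two genuinely new prefactors are $D^{a_d/2}$ and $\binom{-s_d}{k_d}(-1)^{k_d}\big|_{s_d=a_d,k_d=k_d'}=\binom{-a_d}{k_d'}(-1)^{k_d'}$, which is precisely what the claimed formula contains.

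The crux of the argument, and the step I expect to be most delicate, is the evaluation $Q^{k_d'}\alpha^{-(a_d+2k_d')}=\zeta_q^{-n}$, which produces the root of unity $\zeta_q^{-nr_d(1)}$. Unwinding the definition of $a_d$ gives $\alpha^{a_d+2k_d'}=|Q|^{k_d'}\exp\!\big((\tfrac{2n}{q}+\ell_{Q,k_d'(d)})\pi i\big)=|Q|^{k_d'}e^{\ell_{Q,k_d'(d)}\pi i}\,\zeta_q^{n}$, and one must verify that the correction term $\ell_{Q,k_d'(d)}$ — equal to $0$ for $Q>0$ and to $k_d'$ for $Q<0$ — is exactly what turns $|Q|^{k_d'}e^{\ell_{Q,k_d'(d)}\pi i}$ into $Q^{k_d'}$ in both sign regimes, so that $\alpha^{a_d+2k_d'}=Q^{k_d'}\zeta_q^{n}$ uniformly. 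A minor additional point is to confirm that the target hyperplane selects a unique pair $(k_d',n)$, so that no spurious cross terms from other indices contaminate the residue; this follows from the simplicity of the poles and from distinct admissible pairs determining distinct hyperplanes.
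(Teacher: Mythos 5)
Your proposal is correct and follows essentially the same route as the paper's proof: reduce to the per-${\bf r}$ residue of $\zeta_U^d({\bf s}\mid{\bf r})$ via \eqref{eq3.6d}, compute the residue of the geometric denominator as $1/(q\log\alpha)$, use the pole relation $\alpha^{a_d+2k_d'}=Q^{k_d'}\zeta_q^{n}$ to produce $\zeta_q^{-nr_d(1)}$ and to collapse the surviving $k_d=k_d'$ terms, and reassemble the remaining factors into $\zeta_U^{d-1}$. Your explicit verification of the sign bookkeeping through $\ell_{Q,k_d'}$ and of the uniqueness of the pair $(k_d',n)$ makes precise two points the paper leaves implicit, but the argument is the same.
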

 
\begin{proof}
The case $d = 1$ was done by Kamano \cite{Kamano2013}. Now
for $d > 1,$ when $s_d = a_d$, we have 
\begin{align*}
& (s_d +2k_d')\log \alpha = k_d' \log |Q|+ \left(\frac{2n}{q} + \ell_{Q, k_d'}  \right) \pi i 
\end{align*}
and this implies that $\alpha^{(s_d +2k_d')} =  Q^{ k_d'} \zeta_q^n$. Therefore, $\alpha^{-(r_d(1))(s_d + 2k_d')} - Q^{-(r_d(1))k_d'}\zeta_{q}^{-(r_d(1))n}$ is an analytic function with simple zeros at $ a_d$.  Thus 
\begin{align}\label{eq5.10}
\begin{split}
 \underset{s_d = a_d}{\mathrm{Res}} \frac{1}{1- Q^{qk_d}\alpha^{-q(s_d + 2k_d)}} &= \lim_{s_d \rightarrow a_d} \frac{s_d - a_d}{1- Q^{qk_d}\alpha^{-q(s_d + 2k_d)}} \\
&= \frac{1}{\frac{d}{ds_d}\left(1- Q^{qk_d}\alpha^{-q(s_d + 2k_d)}\right)}\Bigg|_{s_d = a_d} = \frac{1}{q\log \alpha}.
\end{split}
\end{align}

Hence, the residue of $\zeta_U^d({\bf s \mid {\bf r}})$  along the hyperplane $s_d=a_d$ is 
\begin{align*}
&\lim_{s_d \rightarrow a_d}  (s_d - a_d)D^{\frac{s_d(1)}2} \sum_{k_1 =0}^{\infty}\binom{-s_1}{k_1} (-1)^{k_1} \cdots \sum_{k_d =0}^{\infty}\binom{-s_d}{k_d} (-1)^{k_d} \times\\
&\times \frac{Q^{k_1r_1} \alpha^{-r_1(s_1 + 2k_1)}}{1 -  Q^{qk_d(1)}\alpha^{-q(s_d(1) + 2k_d(1))}}\times  \cdots \times \frac{ Q^{k_dr_d(1)}\alpha^{-r_d(1)(s_d + 2k_d)}}{(1- Q^{qk_d}\alpha^{-q(s_d + 2k_d)})}\\
&= D^{\frac{s_{d-1}(1)}2} \sum_{k_1 =0}^{\infty}\binom{-s_1}{k_1} (-1)^{k_1} \cdots \sum_{k_d =0}^{\infty}\binom{-s_{d-1}}{k_{d-1}} (-1)^{k_{d-1}} \\
&\times\frac{Q^{k_1r_1} \alpha^{-r_1(s_1 + 2k_1)}}{1 -  Q^{qk_d(1)}\alpha^{-q(s_d(1)+ 2k_d(1))}}  \Bigg|_{s_d= a_d}\times \cdots \times  \frac{ Q^{k_{d-1}r_{d-1}(1)}\alpha^{-r_{d-1}(1)(s_{d-1} + 2k_{d-1})}}{(1- Q^{qk_d(d-1)}\alpha^{-q(s_{d}(d-1) + 2k_{d}(d-1)})}\Bigg|_{s_d= a_d}\\
&\times \lim_{s_d \rightarrow a_d} D^{\frac{a_d}2}(-1)^{k_{d}} \binom{-s_d}{k_d}\frac{  (s_d - a_d)Q^{k_dr_d(1)}\alpha^{-r_d(1)(s_d + 2k_d)}}{(1- Q^{qk_d}\alpha^{-q(s_d + 2k_d})}.
\end{align*}

Note that after evaluation of limit, the terms containing only $k_d$ but not $k_1, \ldots, k_{d-1}$ survives when $k_d = k_d'$ and rest of the terms vanish. Hence, the above estimation becomes
\begin{align*}
 &\underset{s_d = a_d}{\mathrm{Res}} \zeta_U^d({\bf s \mid {\bf r}}) =  D^{\frac{a_d}2} D^{\frac{s_{d-1}(1)}2} \sum_{k_1 =0}^{\infty}\binom{-s_1}{k_1} (-1)^{k_1} \cdots \sum_{k_{d-1} =0}^{\infty}\binom{-s_{d-1}}{k_{d-1}} (-1)^{k_{d-1}} \\
 &\times \frac{Q^{k_1r_1} \alpha^{-r_1(s_1+2k_1)}}{1 -  Q^{qk_{d-1}(1)}\alpha^{-q(s_{d-1}(1) + 2k_{d-1}(1))}}\times   \cdots  \times  \frac{ Q^{k_{d-1}r_{d-1}(1)}\alpha^{-r_{d-1}(1)(s_{d-1} + 2k_{d-1})}}{(1- Q^{qk_{d-1}(d-1)}\alpha^{-q(s_{d-1}(d-1) + 2k_{d-1}(d-1)})}\\
&\times  \frac{1}{q\log \alpha} \binom{- a_d}{k_d'} (-1)^{k_d'} \zeta_{q}^{-nr_d(1)}\\
&=  \zeta_U^{d-1}(s_1, \cdots, s_{d-1}\mid r_1, \cdots, r_{d-1}) D^{\frac{a_d}2} \binom{- a_d}{k_d'} (-1)^{k_d'} \frac{ \zeta_{q}^{-nr_d(1)}}{q\log \alpha}.
\end{align*}
Therefore, 
\begin{align}\label{eq86}
\begin{split}
 \underset{s_d = a_d}{\mathrm{Res}}\mathcal{L}_U^d({\bf s}\mid \chi) &= \sum_{r_1 = 1}^{q}\chi_1(r_1) \sum_{r_2 = 1}^{q}\chi_{2}(r_2(1))\cdots \sum_{r_d=1}^{q}\chi_d(r_d(1)) \underset{s_d = a_d}{\mathrm{Res}} \zeta_U^d({\bf s \mid {\bf r}})\\
 &= \sum_{r_1 = 1}^{q}\chi_1(r_1) \sum_{r_2 = 1}^{q}\chi_{2}(r_2(1))\cdots \sum_{r_d=1}^{q}\chi_d(r_d(1))\zeta_{q}^{-nr_d(1)}\\
 & \times\zeta_U^{d-1}(s_1, \cdots, s_{d-1}\mid r_1, \cdots, r_{d-1}) D^{\frac{a_d}2} \binom{- a_d}{k_d'} \frac{(-1)^{k_d'} }{q\log \alpha}.\\
 \end{split}
\end{align}
\end{proof}  
For $n\in \mathbb{Z}$, we define the Gauss sum $\tau(\chi, n)$ by the formula
\[\tau(\chi,n) = \sum_{x \pmod q} \chi(x)\zeta_q^{nx}.\]
 For fixed $(r_1, \cdots, r_{d-1}) \in \mathbb{Z}_{>0}^{d-1}$, we have 
 \begin{equation}\label{eq87}
 \sum_{r_d=1}^{q}\chi_d(r_d(1)) \zeta_{q}^{-nr_d(1)} = \sum_{r_d=1}^{q} \chi_d(-1)\chi_d(-nr_d(1)) \zeta_{q}^{-nr_d(1)} = \chi_d(-1) \tau(\chi_d,n)
 \end{equation}
 as $\chi_d$ is periodic modulo $q$. We state the following lemma related to Gauss sum from \cite[Proposition 2.1.40]{Cohen2007}.
 \begin{lemma}\label{lem89}
For $d = \gcd(a, q),$ if we assume that $\chi$ can not be defined modulo $q/d$, then $\tau(\chi,a) = 0$.
 \end{lemma}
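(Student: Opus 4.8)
The plan is to prove Lemma \ref{lem89}, which states that the Gauss sum $\tau(\chi, a) = \sum_{x \pmod q} \chi(x)\zeta_q^{ax}$ vanishes whenever, writing $d = \gcd(a,q)$, the character $\chi$ cannot be defined modulo $q/d$ (i.e.\ $\chi$ is not induced by any character modulo $q/d$). The natural strategy is to exhibit a symmetry of the summation set under which $\zeta_q^{ax}$ is invariant but $\chi(x)$ is not, forcing the sum to equal a nontrivial multiple of itself.

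First I would translate the hypothesis into a usable group-theoretic statement. Since $d = \gcd(a,q)$, write $a = d a'$ and $q = d q'$ with $\gcd(a', q') = 1$. Then $\zeta_q^{ax} = e^{2\pi i a x / q} = e^{2\pi i a' x / q'}$ depends on $x$ only modulo $q' = q/d$. The key observation is that if $t \equiv 1 \pmod{q/d}$ with $\gcd(t,q)=1$, then $\zeta_q^{a(tx)} = \zeta_q^{ax}$ for every $x$, because $a(tx) \equiv ax \pmod q$ follows from $d(t-1) \equiv 0 \pmod q$ together with $a = da'$. Thus multiplication by such a $t$ permutes the residues mod $q$ while leaving the additive factor $\zeta_q^{ax}$ fixed.

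Next I would exploit this symmetry. Performing the substitution $x \mapsto tx$ (a bijection on the units and, more care needed, on all residues—so I would first reduce the sum to units, or handle the non-unit terms separately) gives
\begin{equation}\label{eq:gauss-symmetry}
\tau(\chi,a) = \sum_{x \pmod q}\chi(tx)\zeta_q^{a(tx)} = \chi(t)\sum_{x \pmod q}\chi(x)\zeta_q^{ax} = \chi(t)\,\tau(\chi,a).
\end{equation}
Hence $(1 - \chi(t))\tau(\chi,a) = 0$. The conclusion $\tau(\chi,a)=0$ therefore follows provided we can choose some $t \equiv 1 \pmod{q/d}$ with $\gcd(t,q)=1$ and $\chi(t)\neq 1$. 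The existence of such a $t$ is exactly the contrapositive of the hypothesis: the subgroup $H = \{t \in (\Z/q\Z)^\times : t \equiv 1 \pmod{q/d}\}$ is the kernel of the reduction map $(\Z/q\Z)^\times \to (\Z/(q/d)\Z)^\times$, and $\chi$ is trivial on all of $H$ if and only if $\chi$ factors through this quotient, i.e.\ if and only if $\chi$ can be defined modulo $q/d$. Since we are assuming $\chi$ cannot be so defined, $\chi|_H$ is nontrivial and a suitable $t$ exists.

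I expect the main obstacle to be the bookkeeping around non-unit residues, since $\chi(x)=0$ when $\gcd(x,q)>1$ but the substitution $x\mapsto tx$ and the reduction-map argument are cleanest on $(\Z/q\Z)^\times$. The clean way around this is to note that $\chi(x)=0$ for $x$ not coprime to $q$, so the sum $\tau(\chi,a)$ is really supported on units, and multiplication by a unit $t$ is a genuine bijection of $(\Z/q\Z)^\times$; the argument in \eqref{eq:gauss-symmetry} then goes through verbatim with $x$ ranging over units. With that reduction in hand the proof reduces to the two clean facts above—invariance of $\zeta_q^{ax}$ under $H$ and nontriviality of $\chi|_H$—and since the statement is cited from \cite[Proposition 2.1.40]{Cohen2007}, I would keep the write-up short and refer to that source for the standard details.
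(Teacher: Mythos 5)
Your proof is correct. Note that the paper does not actually prove this lemma at all; it simply cites it as \cite[Proposition 2.1.40]{Cohen2007}, and your argument is precisely the standard one behind that citation: the substitution $x \mapsto tx$ for $t$ in the kernel $H$ of $(\Z/q\Z)^\times \to (\Z/(q/d)\Z)^\times$, the invariance $\zeta_q^{atx} = \zeta_q^{ax}$, and the identity $(1-\chi(t))\tau(\chi,a) = 0$ combined with the fact that $\chi|_H$ is nontrivial exactly when $\chi$ is not definable modulo $q/d$ (for which one should also recall that the reduction map on unit groups is surjective, so that triviality on $H$ really does let $\chi$ descend). One small simplification: since $\gcd(t,q)=1$, multiplication by $t$ is a bijection of all of $\Z/q\Z$, not just of the units, so the bookkeeping you worry about for non-unit residues never arises; the substitution is legitimate on the full sum as written.
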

 It is known that the Dirichlet $L$-function $L(s, \chi)$ defined in \eqref{eq5} is holomorphic on the whole complex plane when $\chi$ is non-principal. Furthermore, the function $L(s, \chi)$ has trivial zeros at non-positive integers. For the function $\mathcal{L}_{U}^{d}({\bf s}\mid \chi),$ we have the following result.
\begin{coro}
Let $\chi_d$ be non-principal character modulo $q$. Then the function  $\mathcal{L}_{U}^{d}({\bf s}\mid \chi)$ is holomorphic on the real axis of $s_d$.
\end{coro}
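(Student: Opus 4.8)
The plan is to show that every pole of $\mathcal{L}_{U}^{d}(\mathbf{s}\mid\chi)$ lying on the real $s_d$-axis is cancelled by the vanishing of the corresponding residue, which by Theorem \ref{th5.3} and the Gauss-sum computation \eqref{eq87} is governed by $\tau(\chi_d,n)$. First I would observe that the poles of $\mathcal{L}_{U}^{d}(\mathbf{s}\mid\chi)$ in the variable $s_d$ are the hyperplanes
\[
s_d = a_d := -2k_d' + k_d'\frac{\log|Q|}{\log\alpha} + \left(\frac{2n}{q} + \ell_{Q,k_d'(d)}\right)\frac{\pi i}{\log\alpha},
\]
indexed by $k_d'\ge 0$ and $n\in\mathbb{Z}$. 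Such a pole lies on the real axis precisely when its imaginary part vanishes, i.e. when $\frac{2n}{q} + \ell_{Q,k_d'(d)} = 0$. I would split into the two cases of the definition of $\ell_{Q,\ldots}$: if $Q>0$ then $\ell=0$, so reality forces $n=0$; if $Q<0$ then $\ell = k_d'$, so reality forces $2n/q = -k_d'$, i.e. $n = -qk_d'/2$.

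The key step is then to feed each admissible $n$ into the residue formula of Theorem \ref{th5.3}. Combining \eqref{eq86} with \eqref{eq87}, the residue at $s_d=a_d$ contains the factor $\chi_d(-1)\,\tau(\chi_d,n)$, so it suffices to show $\tau(\chi_d,n)=0$ for each reality-inducing value of $n$. In the case $Q>0$ we have $n=0$, and $\tau(\chi_d,0)=\sum_{x\bmod q}\chi_d(x)=0$ for any non-principal $\chi_d$ by orthogonality. In the case $Q<0$, where $n=-qk_d'/2$, I would invoke Lemma \ref{lem89}: here $\gcd(n,q)=\gcd(qk_d'/2,q)$ is a proper divisor-multiple of $q$, so $q\mid 2n$ and hence $\tau(\chi_d,n)$ reduces to a sum over a coset structure. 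More carefully, since $n$ is an integer multiple of $q/2$, one has $q\mid nx$ exactly on an index-$2$ subgroup situation; the point is that a non-principal character cannot descend to the relevant smaller modulus, so Lemma \ref{lem89} gives $\tau(\chi_d,n)=0$ again. In either case every residue at a real pole vanishes, so these are removable singularities and $\mathcal{L}_{U}^{d}(\mathbf{s}\mid\chi)$ is holomorphic there.

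To conclude, I would note that the residue formula only ever produces a pole along a given hyperplane when its residue is nonzero; since all residues on real $s_d$-hyperplanes vanish, $\mathcal{L}_{U}^{d}(\mathbf{s}\mid\chi)$ extends holomorphically across the entire real $s_d$-axis. This parallels exactly the classical fact, quoted just before the corollary, that $L(s,\chi)$ is holomorphic on $\mathbb{C}$ for non-principal $\chi$; here the Gauss sum $\tau(\chi_d,n)$ plays the role that orthogonality of characters plays in the single-variable case.

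\textbf{The main obstacle} I expect is the case $Q<0$, where $n=-qk_d'/2$ need not be an integer when $k_d'$ is odd. When $qk_d'/2\notin\mathbb{Z}$, the hyperplane simply does not meet the real axis (since the indexing requires $n\in\mathbb{Z}$), so there is no pole to worry about; but I must state this carefully and separate it from the genuine case $qk_d'$ even, where $n$ is a bona fide integer and Lemma \ref{lem89} must be applied with the correct computation of $\gcd(n,q)$ to certify that $\chi_d$ cannot be defined modulo $q/\gcd(n,q)$. Verifying this gcd condition rigorously — and checking that non-principality of $\chi_d$ genuinely obstructs descent to the smaller modulus — is the technical heart of the argument; everything else is bookkeeping around the residue formula already established.
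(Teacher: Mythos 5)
Your proposal is correct and follows essentially the same route as the paper: identify the real poles via the condition $\tfrac{2n}{q}+\ell_{Q,k_d'}=0$, then kill the residues through the Gauss-sum factor $\tau(\chi_d,n)$ and Lemma \ref{lem89}, using that a non-principal character cannot be defined modulo $1$ or $2$. Your explicit case split on the sign of $Q$ (with direct orthogonality for $n=0$ and the parity remark when $qk_d'$ is odd) is just a slightly more detailed bookkeeping of the paper's unified argument that $2n/q\in\mathbb{Z}$ forces $q/\gcd(q,n)\in\{1,2\}$.
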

\begin{proof}
By Theorem \ref{thm2}, the series $\mathcal{L}_U^d(\bf s\mid \chi)$ is holomorphic except the singularities given by \eqref{eq3.6}. If $s_d(d)$ is a real number then, from \eqref{eq3.6}, we must have $\left(2n/q + \ell_{Q, k_d(d)}\right)  = 0$. Since $2n/q+ \ell_{Q, k_d'(d)}\in \mathbb{Z}$, we have $q\mid n$ or $(q/2)\mid n$ according as $q$ is odd or even and this implies $\gcd(q, n) = q$ or $q/2$.  Thus $q/\gcd(q, n) = 1$ or $2$. Since $\chi_d$ is non-principal, the character $\chi_d$ cannot be defined for mod $1$ or $2$. Using Lemma \ref{lem89}, we conclude that $\tau(\chi_d, n) = 0$. Hence, our claim follows.
\end{proof}
Finally, we compute the residues of $\mathcal{L}_U^d({\bf s}\mid \chi)$ along the other hyperplanes \eqref{eq3.6} for $1\leq j \leq d-1$.  
\begin{theorem}\label{th5.4a}
 Let  $d \geq 2$ be a positive integer and $j$ be a positive integer with $1 \leq j \leq d-1$.  Let $k_j', \ldots,k_d'$ be non-negative integers with
 $$a_d(j) := -2k_d'(j) + k_d'(j)  \frac{\log |Q|}{\log \alpha} + \left(\frac{2n}{q} + \ell_{Q, k_d'(j)}  \right) \frac{\pi i }{\log \alpha}.$$ Then
 \begin{align*}
 \underset{s_d(j) = a_d(j)}{\mathrm{Res}}\mathcal{L}_{U}^{d}({\bf s}\mid \chi) &= D^{\frac{a_d(j)}2}  \zeta_{U}^{j-1}({\bf s}\mid \chi) \sum_{\substack{k_j \geq 0,\ldots,k_d \geq 0\\ k_d(j) =  k_d'(j)} }\binom{-s_j}{k_j} (-1)^{k_j} \cdots \binom{-s_d}{k_d} (-1)^{k_d}\\
&  \times \sum_{r_1= 1}^{q }\chi_1(r_1(1)) \cdots \sum_{r_d=1}^{q}\chi_d(r_d(1))\\
&\times\frac{Q^{k_d(j+1)r_{j+1}} \alpha^{-r_{j+1}(s_d(j+1)+ 2k_d(j+1))}}{1 -  Q^{qk_d(j+1)}\alpha^{-q(s_d(j+1) + 2k_d(j+1))}} \cdots  \frac{ Q^{k_dr_d}\alpha^{-r_d(s_d + 2k_d)}}{(1- Q^{qk_d}\alpha^{-q(s_d + 2k_d})} \frac{\zeta_q^{-nr_j(1)}}{q\log \alpha}.
 \end{align*}
\end{theorem}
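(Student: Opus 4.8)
The plan is to compute the residue of $\mathcal{L}_U^d(\mathbf{s}\mid\chi)$ along a hyperplane \eqref{eq3.6} with $j \leq d-1$ by first passing through the linear decomposition \eqref{eq3.6d}, which writes $\mathcal{L}_U^d(\mathbf{s}\mid\chi)$ as a sum over residue classes $r_1,\ldots,r_d \pmod q$ of the shifted zeta functions $\zeta_U^d(\mathbf{s}\mid\mathbf{r})$ weighted by products of character values. Since taking residues is linear, it suffices to compute $\underset{s_d(j)=a_d(j)}{\mathrm{Res}}\,\zeta_U^d(\mathbf{s}\mid\mathbf{r})$ and then reassemble. I would start from the fully expanded product representation \eqref{eq3.3} of $\zeta_U^d(\mathbf{s}\mid\mathbf{r})$, in which the only factor whose denominator can vanish on the hyperplane $s_d(j)=a_d(j)$ is $g_{k_j,\ldots,k_d}(s_j,\ldots,s_d)$, namely the $j$-th geometric-series factor $\bigl(1-Q^{qk_d(j)}\alpha^{-q(s_d(j)+2k_d(j))}\bigr)^{-1}$.

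The core local computation mirrors the $s_d=a_d$ case already carried out in Theorem \ref{th5.3}. On the hyperplane $s_d(j)=a_d(j)$ the defining relation forces $\alpha^{q(s_d(j)+2k_d(j))}=Q^{qk_d(j)}\zeta_q^{qn}=Q^{qk_d(j)}$ precisely when $k_d(j)=k_d'(j)$, so the denominator of the $j$-th factor has a simple zero there while all other factors stay holomorphic. First I would extract the residue of that single factor exactly as in \eqref{eq5.10}, obtaining the universal constant $1/(q\log\alpha)$, together with the surviving numerator $Q^{k_d(j)r_j(1)}\alpha^{-r_j(1)(s_d(j)+2k_d(j))}$ evaluated at $s_d(j)=a_d(j)$, which contributes the phase $\zeta_q^{-nr_j(1)}$. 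Multiplying by the prefactor $D^{(s_d(1))/2}$ specialized to this hyperplane yields the $D^{a_d(j)/2}$ factor. The key bookkeeping point is that the residue-extraction collapses the double sum over $k_j,\ldots,k_d$ to the diagonal constraint $k_d(j)=k_d'(j)$, which accounts for the restricted inner sum $\sum_{k_d(j)=k_d'(j)}\binom{-s_j}{k_j}(-1)^{k_j}\cdots\binom{-s_d}{k_d}(-1)^{k_d}$ in the statement.

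Next I would identify the remaining factors. The factors indexed $1,\ldots,j-1$ reassemble, after summing $r_1,\ldots,r_{j-1}$ against $\chi_1,\ldots,\chi_{j-1}$ via \eqref{eq3.6d}, into the lower-depth multiple Lucas $L$-function $\mathcal{L}_U^{j-1}(\mathbf{s}\mid\chi)$ (written $\zeta_U^{j-1}(\mathbf{s}\mid\chi)$ in the statement with the convention that it is $1$ when $j=1$, though here $j\geq 1$ and the $d\geq 2$ hypothesis keeps the structure nontrivial). The factors indexed $j+1,\ldots,d$ remain as the explicit geometric quotients $\tfrac{Q^{k_d(j+1)r_{j+1}}\alpha^{-r_{j+1}(s_d(j+1)+2k_d(j+1))}}{1-Q^{qk_d(j+1)}\alpha^{-q(s_d(j+1)+2k_d(j+1))}}\cdots\tfrac{Q^{k_d r_d}\alpha^{-r_d(s_d+2k_d)}}{1-Q^{qk_d}\alpha^{-q(s_d+2k_d)}}$ that appear untouched in the conclusion. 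Finally I would reinstate the full character sums $\sum_{r_1=1}^q\chi_1(r_1(1))\cdots\sum_{r_d=1}^q\chi_d(r_d(1))$ coming from \eqref{eq3.6d} and absorb the phase $\zeta_q^{-nr_j(1)}$ produced in the local step, giving the stated formula.

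I expect the main obstacle to be the combinatorial bookkeeping of \emph{which} factor's denominator vanishes and verifying that no other factor degenerates simultaneously: because the hyperplanes \eqref{eq3.6} for different $j$ can intersect, one must check that along the generic point of $s_d(j)=a_d(j)$ the factors $g_{k_i,\ldots,k_d}$ for $i\neq j$ stay holomorphic, so that the pole is simple and the residue is a genuine restriction rather than a higher-order object. The second delicate point is tracking the index shifts carefully — distinguishing $s_d(j)$, $k_d(j)$, $r_d(j)$, and the partial sums $r_j(1)=r_1+\cdots+r_j$ — so that the surviving numerator produces exactly $\zeta_q^{-nr_j(1)}$ and the constraint $k_d(j)=k_d'(j)$ is recorded correctly; the analytic content beyond the single-factor residue \eqref{eq5.10} is routine, since every other step is an interchange of absolutely convergent sums justified by Proposition \ref{prop2.1}.
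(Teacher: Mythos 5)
Your overall route is the same as the paper's: reduce to $\zeta_U^d({\bf s}\mid{\bf r})$ by linearity of the residue through \eqref{eq3.6d}, observe that on the hyperplane only the $j$-th geometric factor degenerates and only for the terms with $k_d(j)=k_d'(j)$, extract the single-factor residue $1/(q\log\alpha)$ exactly as in \eqref{eq5.10}, keep the factors indexed $j+1,\ldots,d$ untouched, and recombine the lower factors into a depth-$(j-1)$ object; this matches the paper's proof of Theorem \ref{th5.4a}, including the origin of the restricted $k$-sum. Two bookkeeping caveats. First, your ``surviving numerator'' $Q^{k_d(j)r_j(1)}\alpha^{-r_j(1)(s_d(j)+2k_d(j))}$ is neither of the two groupings displayed in \eqref{eq3.3}; it is the hybrid obtained by pulling the hyperplane-dependent part out of the product of the first $j$ numerators, and only with that regrouping (which you should state and verify) is it true that this single factor contributes $\zeta_q^{-nr_j(1)}$. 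In the paper's grouping the $j$-th factor yields only $\zeta_q^{-nr_j}$, and the remaining phases $\zeta_q^{-nr_\ell}$, $\ell<j$, come from the numerators of the lower factors restricted to the hyperplane via \eqref{eq5.10a}; similarly, the denominators of the factors $1,\ldots,j-1$ still involve $s_j,\ldots,s_d$ and collapse to depth-$(j-1)$ denominators only after the substitution $\alpha^{-q(s_d(j)+2k_d'(j))}=Q^{-qk_d'(j)}$.

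The one step that would fail as written is the assertion that summing $r_1,\ldots,r_{j-1}$ against $\chi_1,\ldots,\chi_{j-1}$ turns the lower factors into $\mathcal{L}_U^{j-1}({\bf s}\mid\chi)$. That factorization is impossible: the characters $\chi_j(r_j(1)),\ldots,\chi_d(r_d(1))$ and the phase $\zeta_q^{-nr_j(1)}$ all involve $r_1+\cdots+r_{j-1}$, so the sums over $r_1,\ldots,r_{j-1}$ do not decouple from those over $r_j,\ldots,r_d$; moreover your final step, reinstating the full character sums on top of such a collapsed factor, would double-count $\chi_1,\ldots,\chi_{j-1}$. What the computation actually yields --- and what the paper's proof produces --- is $\zeta_U^{j-1}(s_1,\ldots,s_{j-1}\mid r_1,\ldots,r_{j-1})$ sitting \emph{inside} the full $d$-fold sum over $r_1,\ldots,r_d$; the symbol $\zeta_U^{j-1}({\bf s}\mid\chi)$ written outside the $r$-sums in the theorem statement is the paper's own notational slip, not a character-summed $L$-function, and your attempt to read it as $\mathcal{L}_U^{j-1}({\bf s}\mid\chi)$ is exactly the reading one must avoid.
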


\begin{proof}
Note that when $s_d(j) = a_d(j), $ we have 
\begin{align*}
\begin{split}
&  \left( s_d(j)+ 2k_d'(j)\right) \log \alpha= k_d'(j) \log |Q|+ \left(\frac{2n}{q} + \ell_{Q, k_d'(j)}  \right) \pi i
\end{split}
\end{align*}
and this implies that $\alpha^{ s_d(j)+2k_d'(j)} =  Q^{k_d'(j)} \zeta_q^n$. Therefore, for $ 1 \leq \ell \leq j,$ we obtain 
\begin{align}\label{eq5.10a}
\begin{split}
 \alpha^{-r_{\ell}\left(s_d(j)+ 2k_d'(j)\right)}  &=  Q^{-r_{\ell} k_d'(j)} \zeta_{q}^{-n r_{\ell}},\quad  \alpha^{-r_j(1)\left( s_d(j)+ 2k_d'(j)\right)}  =  Q^{-r_j(1) k_d'(j)} \zeta_{q}^{-n r_j(1)}\\
&   \alpha^{-q \left( s_d(j)+2k_d'(j)\right)} =  Q^{-qk_d'(j)}.
 \end{split}
\end{align} 
 Further, we observe that $ \alpha^{q((s_d(j)+ 2k'_d(j))} -  Q^{qk'_d(j)}$ is an analytic function with simple zeros at $ a_d(j)$ when $1\leq j< d$.  By proceeding as in the proof of Theorem \ref{th5.3}, we have
\begin{align*}
& \underset{s_d(j) = a_{d}(j)}{\mathrm{Res}} \frac{1}{1 -  Q^{qk_d(j)}\alpha^{-q(s_d(j) + 2k_d(j))}}  = \lim_{s_d(j) \rightarrow a_{d}(j)} \frac{\left( s_d(j) - a_{d}(j)\right)}{1 -  Q^{qk_d(j)}\alpha^{-q(s_d(j) + 2k_d(j))}}
 = \frac{1}{q\log \alpha}.
\end{align*}
Now consider the following limit:
\begin{align}\label{eq5.10e}
\begin{split}
& \lim_{s_d(j) \rightarrow a_{d}(j)} D^{\frac{s_d(j)}2}  \sum_{k_j, \ldots,k_d =0}^{\infty}\binom{-s_j}{k_j} (-1)^{k_j} \cdots \binom{-s_d}{k_d} (-1)^{k_d} \\
&\frac{ Q^{k_d(j)r_j} \alpha^{-r_j(s_d(j)+ 2k_d(j))} \left( s_d(j) - a_{d}(j)\right)}{1 -  Q^{qk_d(j)}\alpha^{-q(s_d(j) + 2k_d(j))}}  
\cdots \frac{ Q^{k_dr_d}\alpha^{-r_d(s_d + 2k_d)}}{(1- Q^{qk_d}\alpha^{-q(s_d + 2k_d})}.
\end{split}
\end{align}
Note that after the calculation of the limit, only those terms containing $k_j, \ldots, k_d$ will survive when $k_d(j) = k_d'(j)$ and rest of the terms will vanish. Therefore \eqref{eq5.10e} becomes
\begin{align*}
&D^{\frac{a_d(j)} 2} \sum_{\substack{k_j \geq 0,\ldots,k_d \geq 0\\ k_d(j) =  k_d'(j)} }\binom{-s_j}{k_j} (-1)^{k_j} \cdots \binom{-s_d}{k_d} (-1)^{k_d} \times\\
&\frac{ \zeta_{q}^{-n r_{j}} Q^{k_d(j+1)r_{j+1}} \alpha^{-r_{j+1}(s_d(j+1)+ 2k_d(j+1))}}{1 -  Q^{qk_d(j+1)}\alpha^{-q(s_d(j+1) + 2k_d(j+1))}} \cdots  \frac{ Q^{k_dr_d}\alpha^{-r_d(s_d + 2k_d)}}{(1- Q^{qk_d}\alpha^{-q(s_d + 2k_d})}\frac{1}{q\log \alpha}.
\end{align*}

Hence, the residue of $\zeta_U^d({\bf s} \mid {\bf r})$  along the hyperplane $s_d(j) = a_d(j) $ is 
\begin{align*}
  \underset{s_d(j) = a_d(j)}{\mathrm{Res}} & \zeta_U^d({\bf s} \mid {\bf r})= \lim_{s_d(j) \rightarrow a_{d}(j) }  \left( s_d(j) - a_{d}(j)\right) \zeta_U^d({\bf s} \mid {\bf r}) \\
  & =  D^{\frac{s_d(j)}2} D^{\frac{s_{j-1}(1)}{2}} \sum_{k_1, \dots,k_{j-1} =0}^{\infty}\binom{-s_1}{k_1} (-1)^{k_1} \cdots \binom{-s_{j-1}}{k_{j-1}} (-1)^{k_{j-1}} \times\\
 & \sum_{k_j, \ldots,k_d =0}^{\infty}\binom{-s_j}{k_j} (-1)^{k_j} \cdots \binom{-s_d}{k_d} (-1)^{k_d}\frac{Q^{k_d(1)r_{1}} \alpha^{-r_{1}(s_d(1)+ 2k_d(1))}}{1 -  Q^{qk_d(1)}\alpha^{-q(s_d(1) + 2k_d(1))}}\bigg|_{s_d(j) = a_d(j)}\\ 
 &\times \cdots \times \frac{Q^{k_d(j-1)r_{j-1}} \alpha^{-r_{j-1}(s_d(j-1)+ 2k_d(j-1))}}{1 -  Q^{qk_d(j-1)}\alpha^{-q(s_d(j-1) + 2k_d(j-1))}}\bigg|_{s_d(j) = a_d(j)}\\
 &\times  \lim_{s_d(j) \rightarrow a_{d}(j) }   \frac{Q^{k_d(j)r_j} \alpha^{-r_j(s_d(j)+ 2k_d(j))} \left( s_d(j) - a_{d}(j)\right)}{1 -  Q^{qk_d(j)}\alpha^{-q(s_d(j) + 2k_d(j))}} \cdots  \frac{ Q^{k_dr_d}\alpha^{-r_d(s_d + 2k_d)}}{(1- Q^{qk_d}\alpha^{-q(s_d + 2k_d})}\\
=& D^{\frac{a_d(j)} 2} \zeta_U^{j-1}(s_1, \ldots, s_{j-1}; r_1, \ldots, r_{j-1}) \sum_{\substack{k_j \geq 0,\ldots,k_d \geq 0\\ k_d(j) =  k_d'(j)} }\binom{-s_j}{k_j} (-1)^{k_j} \cdots \binom{-s_d}{k_d} (-1)^{k_d} \\
&\times\frac{Q^{k_d(j+1)r_{j+1}} \alpha^{-r_{j+1}(s_d(j+1)+ 2k_d(j+1))}}{1 -  Q^{qk_d(j+1)}\alpha^{-q(s_d(j+1) + 2k_d(j+1))}} \cdots  \frac{ Q^{k_dr_d}\alpha^{-r_d(s_d + 2k_d)}}{(1- Q^{qk_d}\alpha^{-q(s_d + 2k_d})} \frac{\zeta_q^{-nr_j(1)}}{q\log \alpha}.
\end{align*}
Hence, the Theorem \ref{th5.4a} follows from \eqref{eq3.6d}.
\end{proof}

\subsection{Residues of $\mathcal{L}_U^d({\bf s}\mid {\bf f})$}

The residue of the multiple additive Lucas $L$-function $\mathcal{L}_U^d({\bf s} \mid {\bf f})$ along these hyperplanes \eqref{al3.5} is defined to be the restriction  of the meromorphic function
$$\left( s_d(j) +2k_d(j) -\frac{\log g_j^d(1)}{\log \alpha} - k_d(j) \frac{\log |Q|}{\log \alpha} - \left(2n + \ell_{Q, k_j,\ldots,k_d}  \right) \frac{\pi i }{\log \alpha} \right)\mathcal{L}_U^d({\bf s} \mid {\bf f})$$ to the hyperplanes \eqref{al3.5}.

 \begin{theorem}\label{th5.5}
 For a non-negative  integer $k_d',$  let $b_d:= -2k_d'+ \frac{\log f_d(1)}{\log \alpha} + k_d'\frac{\log |Q|}{\log \alpha} + \left(2n + \ell_{Q, k_d'}  \right) \frac{\pi i }{\log \alpha}$. Then  for $d >1$,
 $$
 \underset{s_d = b_d}{\mathrm{Res}} \mathcal{L}_U^{d}({\bf s} \mid {\bf f}) =  \mathcal{L}_U^{d-1}({\bf s} \mid {\bf f})D^{\frac{b_d}2} \binom{- b_d}{k_d'} (-1)^{k_d'} \frac{1}{\log \alpha}.$$
 \end{theorem}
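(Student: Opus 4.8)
The plan is to mirror the proof of Theorem \ref{th5.3}, working directly from the explicit expansion \eqref{eq3.9} of $\mathcal{L}_{U}^{d}({\bf s}\mid {\bf f})$ and extracting the residue along the single hyperplane $s_d = b_d$ by isolating the one factor that becomes singular there. Among the $d$ factors in \eqref{eq3.9}, only the last one, $f_d(1)Q^{k_d}/(\alpha^{s_d+2k_d}-f_d(1)Q^{k_d})$, carries $s_d$ alone in its exponent; the remaining factors couple $s_d$ with $s_1,\ldots,s_{d-1}$, so viewing $s_1,\ldots,s_{d-1}$ as free parameters they stay holomorphic at $s_d=b_d$. Consequently only the term $k_d=k_d'$ of the outermost series can contribute to the residue.

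The engine of the computation is the identity $\alpha^{b_d+2k_d'}=f_d(1)Q^{k_d'}$, which I would first extract from the definition of $b_d$. Multiplying the defining relation by $\log\alpha$ gives $(b_d+2k_d')\log\alpha = \log f_d(1) + k_d'\log|Q| + (2n+\ell_{Q,k_d'})\pi i$; exponentiating, $e^{2n\pi i}=1$, while $e^{\ell_{Q,k_d'}\pi i}$ supplies exactly the sign making $|Q|^{k_d'}e^{\ell_{Q,k_d'}\pi i}=Q^{k_d'}$ in both cases $Q>0$ (where $\ell_{Q,k_d'}=0$) and $Q<0$ (where $\ell_{Q,k_d'}=k_d'$). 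Using this, the residue of the singular factor is a short computation:
\begin{align*}
\underset{s_d=b_d}{\mathrm{Res}}\ \frac{f_d(1)Q^{k_d'}}{\alpha^{s_d+2k_d'}-f_d(1)Q^{k_d'}} = \frac{f_d(1)Q^{k_d'}}{\frac{d}{ds_d}\alpha^{s_d+2k_d'}\big|_{s_d=b_d}} = \frac{f_d(1)Q^{k_d'}}{\alpha^{b_d+2k_d'}\log\alpha} = \frac{1}{\log\alpha}.
\end{align*}

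The crux, and the step I expect to demand the most care, is to show that evaluating the first $d-1$ factors at $s_d=b_d$, $k_d=k_d'$ turns them into precisely the factors of $\mathcal{L}_{U}^{d-1}({\bf s}\mid {\bf f})$. For the $j$-th factor with $1\le j\le d-1$, I would write $\alpha^{s_j+\cdots+s_d+2(k_j+\cdots+k_d)}\big|_{s_d=b_d,\,k_d=k_d'}=\alpha^{s_j+\cdots+s_{d-1}+2(k_j+\cdots+k_{d-1})}\cdot\alpha^{b_d+2k_d'}$ and substitute $\alpha^{b_d+2k_d'}=f_d(1)Q^{k_d'}$; the common factor $f_d(1)Q^{k_d'}$ then cancels between numerator and denominator, leaving
\begin{align*}
\frac{f_j(1)\cdots f_{d-1}(1)Q^{k_j+\cdots+k_{d-1}}}{\alpha^{s_j+\cdots+s_{d-1}+2(k_j+\cdots+k_{d-1})}-f_j(1)\cdots f_{d-1}(1)Q^{k_j+\cdots+k_{d-1}}},
\end{align*}
which is exactly the $j$-th factor appearing in \eqref{eq3.9} at depth $d-1$.

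Finally I would split the prefactor as $D^{(s_1+\cdots+s_d)/2}\big|_{s_d=b_d}=D^{b_d/2}\cdot D^{(s_1+\cdots+s_{d-1})/2}$, collect the surviving $(d-1)$-fold sum together with $D^{(s_1+\cdots+s_{d-1})/2}$ into $\mathcal{L}_{U}^{d-1}({\bf s}\mid {\bf f})$, and multiply by the leftover constants $D^{b_d/2}\binom{-b_d}{k_d'}(-1)^{k_d'}\frac{1}{\log\alpha}$ coming from the isolated $k_d=k_d'$ summand and the residue computed above. This yields the asserted formula. The only genuine subtlety is the bookkeeping in the cancellation step; everything else is the analogue of Theorem \ref{th5.3} with the Gauss-sum factor replaced by the constant $f_d(1)Q^{k_d'}$, which is what turns the modulus-$q$ residue $\tfrac{\pi i}{q\log\alpha}$ there into $\tfrac{1}{\log\alpha}$ here.
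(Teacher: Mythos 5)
Your proposal is correct and follows essentially the same route as the paper's own proof: isolate the unique singular factor of \eqref{eq3.9} on the hyperplane $s_d=b_d$, compute its residue as $\tfrac{1}{\log\alpha}$ via the identity $\alpha^{b_d+2k_d'}=f_d(1)Q^{k_d'}$, observe that only the $k_d=k_d'$ summand survives, and cancel the common factor $f_d(1)Q^{k_d'}$ in each of the remaining $d-1$ factors so that, together with the split $D^{(s_1+\cdots+s_d)/2}=D^{b_d/2}D^{(s_1+\cdots+s_{d-1})/2}$, they reassemble into $\mathcal{L}_U^{d-1}({\bf s}\mid {\bf f})$. The only slip is in your closing aside: the residue constant appearing in Theorem \ref{th5.3} is $\tfrac{1}{q\log\alpha}$, not $\tfrac{\pi i}{q\log\alpha}$, but this remark plays no role in your argument.
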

 
\begin{proof}
Note that 
\begin{align*}
\begin{split}
 \underset{s_d = b_d}{\mathrm{Res}} \frac{1}{\alpha^{s_d + 2k_d'} - Q^{k_d'}f_d(1)} &= \lim_{s_d \rightarrow b_d} \frac{s_d - b_d}{\alpha^{s_d + 2k_d'} - Q^{k_d'}f_d(1)} \\
&= \frac{1}{\frac{d}{ds_d}\left(\alpha^{s_d + 2k_d'} - Q^{k_d'f_d(1)}\right)}\Bigg|_{s_d = b_d} = \frac{1}{Q^{k_d'}f_d(1)\log \alpha}.
\end{split}
\end{align*}
Hence, the residue of $\mathcal{L}_U^d({\bf s} \mid {\bf f})$  along the hyperplane $b_d(d)$ is 
\begin{align*}
&\lim_{s_d \rightarrow b_d(d)}  (s_d - b_d)D^{\frac{s_d(1)}2} \sum_{k_1 =0}^{\infty}\binom{-s_1}{k_1} (-1)^{k_1} \cdots \sum_{k_d =0}^{\infty}\binom{-s_d}{k_d} (-1)^{k_d}\\
& \times \frac{g_1^d(1)Q^{k_d(1)}}{\alpha^{s_d(1) + 2k_d(1)} - g_1^d(1)Q^{k_d(1)}}\cdots  \frac{f_d(1)Q^{k_d} }{\alpha^{s_d + 2k_d} - f_d(1)Q^{k_d}}\\
&= D^{\frac{s_{d-1}(1)}2} \sum_{k_1 =0}^{\infty}\binom{-s_1}{k_1} (-1)^{k_1} \cdots \sum_{k_d =0}^{\infty}\binom{-s_{d-1}}{k_{d-1}} (-1)^{k_{d-1}} \times
 \frac{g_1^d(1)Q^{k_d(1)}}{\alpha^{s_d(1) + 2k_d(1) } - g_1^d(1)Q^{k_d(1) }} \Bigg|_{s_d= b_d}\\
 & \times \frac{g_2^d(1)Q^{k_d(2)}}{\alpha^{s_d(2) + 2k_d(2)} - g_2^d(1)Q^{k_d(2)}} \Bigg|_{s_d= b_d}
\cdots  \lim_{s_d \rightarrow b_d} D^{\frac{b_d}2}(-1)^{k_{d}} \binom{-s_d}{k_d}\frac{f_d(1)Q^{k_d} (s_d - b_d) }{\alpha^{s_d + 2k_d} -f_d(1) Q^{k_d}}.
\end{align*}
Now,
\begin{align*}
 \underset{s_d = b_d}{\mathrm{Res}} \mathcal{L}_U^d({\bf s} \mid {\bf f})  &=  D^{\frac{b_d}2} D^{\frac{s_{d-1}(1)}2} \sum_{k_1 =0}^{\infty}\binom{-s_1}{k_1} (-1)^{k_1} \cdots \sum_{k_{d-1} =0}^{\infty}\binom{-s_{d-1}}{k_{d-1}} (-1)^{k_{d-1}} \times\\
&\frac{g_1^{d-1}(1)Q^{k_{d-1}(1)}}{\alpha^{s_{d-1}(1)  + 2k_{d-1}(1)} - g_1^{d-1}(1)Q^{k_{d-1}(1) }}  \cdots \frac{1}{\log \alpha} \binom{-b_d}{k_d'} (-1)^{k_d'}\\
&= \mathcal{L}_U^{d-1}({\bf s} \mid {\bf f})D^{\frac{b_d}2} \binom{- b_d}{k_d'} (-1)^{k_d'} \frac{1}{\log \alpha}.
\end{align*}
\end{proof}  
Finally, we compute the residues along the other hyperplanes \eqref{al3.5} for $1\leq j \leq d-1$.  
\begin{theorem}\label{th5.4}
For a positive integer $d \geq 2$ and for a positive integer $j$ with $1 \leq j \leq d-1,$ let $k_j', \ldots,k_d'$ be non-negative integers. Let 
 $$b_d(j) := -2k_d'(j)+\frac{\log g_j^d(1)}{\log \alpha} + k_d'(j) \frac{\log |Q|}{\log \alpha} + \left(2n + \ell_{Q, k_j',\ldots, k_d'}  \right) \frac{\pi i }{\log \alpha}.$$ Then
\begin{align*}
 \underset{s_d(j) = b_d(j)}{\mathrm{Res}} \mathcal{L}_U^d({\bf s} \mid {\bf f}) & = D^{\frac{b_d(j)}2} \mathcal{L}_U^{j-1}({\bf s} \mid {\bf f})\sum_{\substack{k_j \geq 0,\ldots,k_d \geq 0\\ 
 k_d(j) =  k_d(j)'}}\binom{-s_j}{k_j} (-1)^{k_j} \cdots \binom{-s_d}{k_d}  \\
& \times (-1)^{k_d}  \frac{g_{j+1}^d(1)Q^{k_d(j+1)}}{\alpha^{s_d(j+1)+ 2k_d(j+1)} - Q^{k_d(j+1)}} \cdots \frac{f_d(1)Q^{k_d}}{\alpha^{s_d + 2k_d} - Q^{k_d}}  \frac{1}{\log \alpha}.
\end{align*}
\end{theorem}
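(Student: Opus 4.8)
The plan is to imitate the proof of Theorem~\ref{th5.5} (the case $j=d$): extract the single simple pole coming from the $i=j$ Euler-type factor in the expansion \eqref{eq3.9}, and then show that the factors indexed by $1\le i\le j-1$ reassemble into $\mathcal{L}_U^{j-1}({\bf s}\mid {\bf f})$ while those indexed by $j+1\le i\le d$ are mere spectators. In the compressed notation, \eqref{eq3.9} reads
\[\mathcal{L}_U^d({\bf s}\mid {\bf f}) = D^{s_d(1)/2}\sum_{k_1,\ldots,k_d\ge 0}\ \prod_{i=1}^d\binom{-s_i}{k_i}(-1)^{k_i}\ \prod_{i=1}^d\frac{g_i^d(1)Q^{k_d(i)}}{\alpha^{s_d(i)+2k_d(i)}-g_i^d(1)Q^{k_d(i)}},\]
and the hyperplane $s_d(j)=b_d(j)$ of \eqref{al3.5} is exactly where the $i=j$ factor is singular, which happens for those index vectors with $k_d(j)=k_d'(j)$.

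First I would record the one-factor residue. Along $s_d(j)=b_d(j)$ the defining relation gives $\alpha^{s_d(j)+2k_d'(j)}=g_j^d(1)Q^{k_d'(j)}$, so exactly as in \eqref{eq5.10} and in Theorem~\ref{th5.5}, differentiating the denominator yields
\[\underset{s_d(j)=b_d(j)}{\mathrm{Res}}\ \frac{1}{\alpha^{s_d(j)+2k_d(j)}-g_j^d(1)Q^{k_d(j)}}=\frac{1}{g_j^d(1)Q^{k_d'(j)}\log\alpha}\]
when $k_d(j)=k_d'(j)$; multiplied by the numerator $g_j^d(1)Q^{k_d'(j)}$ of that factor this contributes $1/\log\alpha$. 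For a fixed $b_d(j)$ the pole at $s_d(j)=b_d(j)$ is produced only by the $i=j$ factors having $k_d(j)=k_d'(j)$: the factors with $i\neq j$ are singular on hyperplanes in the variables $s_d(i)$ ($i\neq j$) and are holomorphic along $s_d(j)=b_d(j)$. Consequently every term with $k_d(j)\neq k_d'(j)$ has vanishing residue and the inner summation collapses to the constraint $k_d(j)=k_d'(j)$, exactly as in Theorems~\ref{th5.3}--\ref{th5.4a}.

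The heart of the argument, and the step I expect to demand the most care, is the reassembly of the factors $1\le i\le j-1$ into $\mathcal{L}_U^{j-1}({\bf s}\mid {\bf f})$. Writing $s_d(i)=s_{j-1}(i)+s_d(j)$, $k_d(i)=k_{j-1}(i)+k_d'(j)$ and $g_i^d=g_i^{j-1}g_j^d$, and substituting the pole relation $\alpha^{s_d(j)+2k_d'(j)}=g_j^d(1)Q^{k_d'(j)}$, the common block $g_j^d(1)Q^{k_d'(j)}$ factors out of both numerator and denominator of each such factor and cancels, leaving precisely
\[\frac{g_i^{j-1}(1)Q^{k_{j-1}(i)}}{\alpha^{s_{j-1}(i)+2k_{j-1}(i)}-g_i^{j-1}(1)Q^{k_{j-1}(i)}},\]
the $i$-th factor of $\mathcal{L}_U^{j-1}({\bf s}\mid {\bf f})$. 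Together with the prefactor split $D^{s_d(1)/2}=D^{s_{j-1}(1)/2}D^{b_d(j)/2}$ and the binomials $\binom{-s_i}{k_i}(-1)^{k_i}$ for $i\le j-1$, the entire free sum over $k_1,\ldots,k_{j-1}$ collapses to $D^{b_d(j)/2}\,\mathcal{L}_U^{j-1}({\bf s}\mid {\bf f})$.

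It remains only to collect the spectators. The factors for $j+1\le i\le d$ do not involve $s_j$, so they survive the limit unchanged as $g_i^d(1)Q^{k_d(i)}/(\alpha^{s_d(i)+2k_d(i)}-g_i^d(1)Q^{k_d(i)})$, and the binomials $\binom{-s_i}{k_i}(-1)^{k_i}$ for $j\le i\le d$ remain inside the constrained tail sum over $k_j,\ldots,k_d$ with $k_d(j)=k_d'(j)$. Multiplying the block $D^{b_d(j)/2}\mathcal{L}_U^{j-1}({\bf s}\mid {\bf f})$, the factor $1/\log\alpha$ coming from index $j$, and this tail sum produces exactly the asserted identity. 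The only genuinely delicate point is the cancellation bookkeeping for the $i<j$ factors; once that identity is secured, the remainder is a direct transcription of the $j=d$ computation of Theorem~\ref{th5.5}.
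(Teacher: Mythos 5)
Your proposal is correct and follows essentially the same route as the paper's own proof: extract the simple-pole residue $1/\log\alpha$ from the $i=j$ factor, restrict the tail sum to $k_d(j)=k_d'(j)$, cancel the common block $g_j^d(1)Q^{k_d'(j)}$ in the factors with $i<j$ via the pole relation $\alpha^{s_d(j)+2k_d'(j)}=g_j^d(1)Q^{k_d'(j)}$ so that they reassemble into $\mathcal{L}_U^{j-1}({\bf s}\mid{\bf f})$ together with the split $D^{s_d(1)/2}=D^{s_{j-1}(1)/2}D^{b_d(j)/2}$, and carry the $i>j$ factors through unchanged. The cancellation bookkeeping you flag as the delicate step is exactly the step the paper performs (implicitly) in passing from its second to its third displayed equality, so no gap remains.
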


\begin{proof}

By proceeding as in the proof of Theorem \ref{th5.3}, we have
\begin{align*}
 \lim_{s_d(j) \rightarrow b_{d}(j) }  \frac{ g_j^d(1)Q^{k_d(j)}\left( s_d(j) - b_{d}(j)\right)}{\alpha^{s_d(j)+ 2k_d(j)} - g_j^d(1)Q^{k_d(j)}}  = & \quad  g_j^d(1)Q^{k_d(j)} \underset{s_d(j) - b_{d}(j)}{\mathrm{Res}} \frac{1}{\alpha^{s_d(j)+ 2k_d(j) } - g_j^d(1)Q^{k_d(j)}} \\
 &= \frac{1}{\log \alpha}.
\end{align*}
Note that
\begin{align*}\label{eq5.10b}
\lim_{s_d(j) \rightarrow b_{d}(j)} & D^{\frac{s_d(j)}2}  \sum_{k_j, \ldots,k_d =0}^{\infty}\binom{-s_j}{k_j} (-1)^{k_j} \cdots \binom{-s_d}{k_d} (-1)^{k_d} \\
& \frac{ \left( s_d(j) - b_{d}(j)\right) g_j^d(1)Q^{k_d(j)}}{\alpha^{s_d(j)  + 2k_d(j)} - g_j^d(1)Q^{k_d(j) }}  \cdots \frac{f_d(1)Q^{k_d(d)}}{\alpha^{s_d(d)  + 2k_d(d)} - f_d(1)Q^{k_d(d)}}.\\
&= D^{\frac{b_d(j)} 2} \sum_{\substack{k_j \geq 0,\ldots,k_d \geq 0\\ k_d(j) =  k_d'(j)} }\binom{-s_j}{k_j} (-1)^{k_j} \cdots \binom{-s_d}{k_d} (-1)^{k_d} \\
&\times \frac{g_{j+1}^d(1)Q^{k_d(j+1)}}{\alpha^{s_d(j+1) + 2k_d(j+1)} - g_{j+1}^d(1)Q^{k_d(j+1)}} \cdots \frac{f_d(1)Q^{k_d}}{\alpha^{s_d + 2k_d} - f_d(1)Q^{k_d}}  \frac{1}{\log \alpha}.
\end{align*}
Hence, the residue of $\mathcal{L}_U^d({\bf s} \mid {\bf f})$  along the hyperplane $s_d(j) = b_d(j) $ is 
\begin{align*}
  \underset{s_d(j) = b_d(j)}{\mathrm{Res}} & \mathcal{L}_U^d({\bf s} \mid {\bf f}) = \lim_{s_d(j) \rightarrow b_{d}(j) }  \left( s_d(j) - b_{d}(j)\right) \mathcal{L}_U^d({\bf s} \mid {\bf f}) \\
 &=  D^{\frac{s_d(j)}2} D^{\frac{s_1+ \cdots+ s_{j-1}}2} \sum_{k_1, \dots,k_{j-1} =0}^{\infty}\binom{-s_1}{k_1} (-1)^{k_1} \cdots \binom{-s_{j-1}}{k_{j-1}} (-1)^{k_{j-1}} \times\\
& \sum_{k_j, \ldots,k_d =0}^{\infty}\binom{-s_j}{k_j} (-1)^{k_j} \cdots \binom{-s_d}{k_d} (-1)^{k_d}
\frac{g_1^d(1)Q^{k_d(1)}}{\alpha^{s_d(1)  + 2k_d(1)} - g_1^d(1)Q^{k_d(1) }}\bigg|_{s_d(j) = b_d(j)}  \cdots \\
& \frac{g_{j-1}^d(1)Q^{k_d(j-1)}}{\alpha^{s_d(j-1)  + 2k_d(j-1)} - g_{j-1}^d(1)Q^{k_d(j-1)}} \bigg|_{s_d(j) = b_d(j)} \lim_{s_d(j) \rightarrow b_{d}(j) } \frac{ \left( s_d(j) - b_{d}(j)\right) g_j^d(1)Q^{k_d(j)}}{\alpha^{s_d(j)  + 2k_d(j)} - g_j^d(1)Q^{k_d(j) }} \\
& \cdots \frac{f_d(1)Q^{k_d}}{\alpha^{s_d  + 2k_d} - f_d(1)Q^{k_d}} \\
=&  D^{\frac{s_d(j)}2} D^{\frac{s_1+ \cdots+ s_{j-1}}2} \sum_{k_1, \dots,k_{j-1} =0}^{\infty}\binom{-s_1}{k_1} (-1)^{k_1} \cdots \binom{-s_{j-1}}{k_{j-1}} (-1)^{k_{j-1}} \times\\
& \frac{g_1^{j-1}(1)Q^{k_{j-1}(1)}}{\alpha^{s_{j-1}(1)  + 2k_{j-1}(1)} - g_1^{j-1}(1)Q^{k_{j-1}(1) }}  \cdots 
 \frac{g_{j-1}^{j-1}(1)Q^{k_{j-1}(j-1)}}{\alpha^{s_{j-1}(j-1)  + 2k_{j-1}(j-1)} - g_{j-1}^{j-1}(1)Q^{k_{j-1}(j-1)}}   \\ & \sum_{k_j, \ldots,k_d =0}^{\infty}\binom{-s_j}{k_j} (-1)^{k_j} \cdots \binom{-s_d}{k_d} (-1)^{k_d}   \lim_{s_d(j) \rightarrow b_{d}(j) } \frac{ \left( s_d(j) - b_{d}(j)\right) g_j^d(1)Q^{k_d(j)}}{\alpha^{s_d(j)  + 2k_d(j)} - g_j^d(1)Q^{k_d(j) }} \\
& \cdots \frac{f_d(1)Q^{k_d(d)}}{\alpha^{s_d(d)  + 2k_d(d)} - f_d(1)Q^{k_d(d)}} \\
=& D^{\frac{b_d(j)} 2} \mathcal{L}_U^{j-1}({\bf s} \mid {\bf f}) \sum_{\substack{k_j \geq 0,\ldots,k_d \geq 0\\ k_d(j) =  k_d'(j)} }\binom{-s_j}{k_j} (-1)^{k_j} \cdots \binom{-s_d}{k_d} (-1)^{k_d} \times\\
& \frac{g_{j+1}^d(1)Q^{k_d(j+1)}}{\alpha^{s_d(j+1) + 2k_d(j+1)} - g_{j+1}^d(1)Q^{k_d(j+1)}} \cdots \frac{f_d(1)Q^{k_d}}{\alpha^{s_d + 2k_d} - f_d(1)Q^{k_d}}  \frac{1}{\log \alpha}.
\end{align*}
\end{proof}

\section{Special values at negative integers}\label{sec5}
     
     In this section, the special values of $\mathcal{L}_{U}^{d}({\bf s}\mid \chi) $ at negative integer arguments are discussed. First, we give a sufficient condition for $\mathcal{L}_{U}^{d}({\bf s}\mid \chi) $  to be holomorphic at $(s_1, \ldots, s_d) = (-m_1, \ldots, -m_d)$ where $ m_1, \ldots, m_d \in \mathbb{N}$. Further, we denote \[m_d(j) = m_j + m_{j+1} + \cdots + m_d, \; \mbox{for}\;\; j = 1, 2, \ldots, d.\]
\begin{lemma}[See Lemma 5, \cite{Kamano2013}]\label{lem3}
Let $P$ and $Q$ be rational numbers and $\sqrt{D}$ is an irrational number. Then $\log|Q|/ \log\alpha$ is rational if and only if $Q = \pm 1.$
\end{lemma}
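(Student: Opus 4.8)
The plan is to prove the two implications separately, with the reverse direction being immediate and the forward direction resting on the fact that $\alpha$ is a genuine quadratic irrational. Throughout I use that the standing hypotheses \eqref{eq3b}, together with the irrationality of $\sqrt{D}$, force $\alpha$ to be the dominant root $\alpha = (P+\sqrt{D})/2$ with $\alpha > 1$; in particular $\log\alpha > 0$, and $\alpha \notin \mathbb{Q}$ since $P, Q \in \mathbb{Q}$ while $\sqrt{D}$ is irrational.

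First I would dispose of the easy implication: if $Q = \pm 1$ then $|Q| = 1$, so $\log|Q| = 0$ and hence $\log|Q|/\log\alpha = 0$, which is rational (here the denominator is nonzero precisely because $\alpha > 1$).

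For the converse I would argue by contraposition on $|Q|$. Suppose $\log|Q|/\log\alpha$ is rational but $Q \neq \pm 1$, so $|Q| \neq 1$ and $\log|Q| \neq 0$; then the quotient is a \emph{nonzero} rational, say $a/b$ with $a,b$ nonzero integers and $b>0$. The relation $b\log|Q| = a\log\alpha$ exponentiates to $|Q|^{b} = \alpha^{a}$, whose left-hand side is a positive rational number because $Q \in \mathbb{Q}$. Thus the assumption would force $\alpha^{a} \in \mathbb{Q}$ for some nonzero integer $a$, and the heart of the matter is to show this cannot happen. Since $\mathbb{Q}(\alpha) = \mathbb{Q}(\sqrt{D})$ is a degree-two extension, its nontrivial automorphism $\sigma\colon \sqrt{D}\mapsto -\sqrt{D}$ sends $\alpha \mapsto \beta$. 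If $\alpha^{a} \in \mathbb{Q}$ it is fixed by $\sigma$, giving $\beta^{a} = \alpha^{a}$, i.e. $(\alpha/\beta)^{a} = 1$ (note $\beta \neq 0$ as $\alpha\beta = Q \neq 0$). Hence the real number $\alpha/\beta$ is a root of unity, so $\alpha/\beta = \pm 1$: the value $+1$ contradicts $\sqrt{D} = \alpha-\beta \neq 0$, while $-1$ gives $P = \alpha+\beta = 0$, contradicting $P > 0$. This contradiction shows $|Q| = 1$, i.e. $Q = \pm 1$. (For $a < 0$ one first reduces to $\alpha^{|a|}$, which is legitimate since $\alpha \neq 0$; alternatively, writing $\alpha^{n} = A_n + B_n\sqrt{D}$ one checks $B_n = U_n/2$, so $\alpha^{n} \in \mathbb{Q}$ would force $U_n = 0$, impossible for $n \geq 1$.)

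I expect the only real obstacle to be the step $\alpha^{a} \notin \mathbb{Q}$, and this is exactly where the hypotheses are indispensable: $\sqrt{D}$ irrational guarantees $\alpha \neq \beta$ and $\alpha \notin \mathbb{Q}$, while $P > 0$ rules out $\alpha = -\beta$. Everything surrounding it is formal manipulation of logarithms, so once this irrationality input is secured the lemma follows.
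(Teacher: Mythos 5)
Your proof is correct. Note first that the paper itself does not prove this lemma at all: it is stated with the citation ``See Lemma 5, \cite{Kamano2013}'' and the proof is deferred entirely to Kamano's paper, so your self-contained argument is a genuine addition rather than a parallel to an in-text proof. The argument you give is sound at every step: the hypotheses \eqref{eq3b} do force $D>0$ and $\alpha=(P+\sqrt{D})/2>1$, so $\log\alpha>0$ and the reverse implication is trivial; for the forward direction, the exponentiation $|Q|^{b}=\alpha^{a}$ with $a\neq 0$ correctly reduces everything to showing $\alpha^{a}\notin\mathbb{Q}$, and the Galois step is watertight, since $\sigma(\alpha)=\beta$, $\beta\neq 0$ (as $\alpha\beta=Q\neq 0$), and a real root of unity is $\pm 1$, with $+1$ excluded by $\sqrt{D}=\alpha-\beta\neq 0$ and $-1$ excluded by $P=\alpha+\beta>0$. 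Your parenthetical alternative --- writing $\alpha^{n}=\tfrac{1}{2}(V_n+U_n\sqrt{D})$ so that $\alpha^{n}\in\mathbb{Q}$ would force $U_n=0$, impossible since $|\beta|<\alpha$ --- is in fact essentially the route taken in Kamano's cited proof, so your main argument (conjugation applied to $\alpha^{a}$ directly) and the cited one differ only cosmetically: both exploit that $\alpha$ is a genuine quadratic irrational whose conjugate has strictly smaller absolute value. What your write-up buys is that the lemma becomes verifiable within the paper itself, and it makes explicit exactly which standing hypotheses ($P>0$, $Q\neq 0$, $\sqrt{D}$ irrational) are used and where.
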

\begin{proposition}\label{prop2}
Let $(m_1, \ldots, m_d) \in \mathbb{N}^d$ and $\chi$ be a Dirichlet character of modulus $q,$ where $q$ is a positive integer.
\begin{enumerate}
\item
For $P \in \mathbb{Q}$ and $Q = 1,$ the function  $\mathcal{L}_{U}^{d}({\bf s}\mid \chi) $ is holomorphic at $ (s_1, \ldots, s_d)= (-m_1, \ldots, -m_d)$ if and only if  
\[m_d(1) \not\equiv 0\ (\textrm{mod}\ 2), m_d(2) \not\equiv 0\ (\textrm{mod}\ 2),\cdots, m_d \not\equiv 0\ (\textrm{mod}\ 2).\]
\item
For $P \in \mathbb{Q}$ and $Q = -1,$ the function $\mathcal{L}_{U}^{d}({\bf s}\mid \chi) $ is holomorphic at $ (s_1, \ldots, s_d)= (-m_1, \ldots, -m_d)$ if and only if 
\[qm_d(1) \not\equiv 0\ (\textrm{mod}\ 4), qm_d(2) \not\equiv 0\ (\textrm{mod}\ 4),\cdots, qm_d \not\equiv 0\ (\textrm{mod}\ 4).\]

\item
For $P, Q \in \mathbb{Q}$ with $Q \not= \pm 1,$  and $\sqrt{D}$ is an irrational number, the function  $\mathcal{L}_{U}^{d}({\bf s}\mid \chi) $ is holomorphic at $ (s_1, \ldots, s_d)= (-m_1, \ldots, -m_d)$ for all $(m_1, \ldots, m_d) \in \mathbb{N}^d.$
\end{enumerate}
\end{proposition}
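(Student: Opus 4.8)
The plan is to read holomorphy directly off the pole locus furnished by Theorem \ref{thm2}. By that theorem every possible singularity of $\mathcal{L}_{U}^{d}(\mathbf{s}\mid\chi)$ lies on one of the hyperplanes \eqref{eq3.6}, so to test holomorphy at the point $\mathbf{s}_0:=(-m_1,\ldots,-m_d)$ I would check, for each $1\le j\le d$, whether any hyperplane \eqref{eq3.6} passes through $\mathbf{s}_0$. At $\mathbf{s}_0$ one has $s_j+\cdots+s_d=-m_d(j)$, a negative real number, and since the hypotheses \eqref{eq3b} force $\alpha>1$ and hence $\log\alpha\in\mathbb{R}_{>0}$, the factor $\pi i/\log\alpha$ is purely imaginary. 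Substituting $s_j+\cdots+s_d=-m_d(j)$ into \eqref{eq3.6} and splitting into real and imaginary parts then reduces the whole question to elementary integrality bookkeeping on $k_d(j)=k_j+\cdots+k_d$ and $n$, carried out separately in the three regimes for $Q$. The ``if'' direction of each part is then immediate: if $\mathbf{s}_0$ lies on no hyperplane, it cannot be a pole, so $\mathcal{L}_{U}^{d}(\mathbf{s}\mid\chi)$ is holomorphic there.

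For $Q=1$ one has $\log|Q|=0$ and $\ell_{Q,k_j,\ldots,k_d}=0$, so the imaginary part forces $n=0$ and the real part forces $m_d(j)=2k_d(j)$; a non-negative integer $k_d(j)$ with this property exists exactly when $m_d(j)$ is even, which gives the parity criterion of part (1). For $Q=-1$ one again has $\log|Q|=0$ but now $\ell_{Q,k_j,\ldots,k_d}=k_d(j)$, so the real part still gives $m_d(j)=2k_d(j)$ while the imaginary part becomes $2n/q+k_d(j)=0$, i.e. $2n=-qk_d(j)$; combining $k_d(j)=m_d(j)/2$ with the integrality of $n$ yields $qm_d(j)\equiv 0\pmod 4$, the criterion of part (2). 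In both cases $\mathbf{s}_0$ meets a hyperplane for index $j$ precisely when the stated congruence fails, so avoidance of the pole locus for every $j$ is equivalent to the congruences holding for all $j$.

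For the remaining case, $Q\neq\pm1$ with $\sqrt{D}$ irrational, I would invoke Lemma \ref{lem3}, which guarantees that $\log|Q|/\log\alpha$ is irrational. The real part of \eqref{eq3.6} at $\mathbf{s}_0$ reads $k_d(j)\,\log|Q|/\log\alpha=2k_d(j)-m_d(j)$, whose right-hand side is an integer. If $k_d(j)\ge 1$ the left-hand side is irrational, a contradiction; if $k_d(j)=0$ the equation forces $m_d(j)=0$, which is excluded since each $m_i\ge1$ gives $m_d(j)\ge1$. Hence no hyperplane passes through $\mathbf{s}_0$ for any choice of $(m_1,\ldots,m_d)$, so holomorphy holds unconditionally and part (3) follows.

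The direction I expect to be the main obstacle is the converse in parts (1) and (2): that whenever $\mathbf{s}_0$ actually lies on a hyperplane the function genuinely fails to be holomorphic. Here I would feed the data $n=0$ and $k_d(j)=m_d(j)/2$ into the residue formulas of Theorems \ref{th5.3} and \ref{th5.4a}. The binomial part is harmless, since a Vandermonde-type summation over the compositions $k_j+\cdots+k_d=m_d(j)/2$ with $k_i\le m_i$ produces the central binomial coefficient $\binom{m_d(j)}{m_d(j)/2}\neq0$; the only possible source of cancellation is the character sum, which by \eqref{eq87} collapses to a Gauss sum $\tau(\chi_j,0)$. Controlling exactly when this Gauss sum is nonzero, and handling the configurations in which several of these hyperplanes meet at $\mathbf{s}_0$ (so that the residues must be analysed along the intersection rather than along a single hyperplane), is the delicate point that has to be settled to upgrade ``$\mathbf{s}_0$ lies on the pole locus'' to ``$\mathcal{L}_{U}^{d}(\mathbf{s}\mid\chi)$ has a pole at $\mathbf{s}_0$,'' thereby yielding the stated equivalences.
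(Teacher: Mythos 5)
Your proposal follows essentially the same route as the paper: the paper likewise reads holomorphy at $(-m_1,\ldots,-m_d)$ off the candidate pole locus (working with the vanishing denominators in the expansion \eqref{eq3.3}, which is exactly the hyperplane condition \eqref{eq3.6} you analyze via real and imaginary parts), obtains the same parity criterion for $Q=1$ and the same mod-$4$ congruence for $Q=-1$, and settles part (3) by the same appeal to Lemma \ref{lem3}. The converse direction you single out as the main obstacle---upgrading ``$\mathbf{s}_0$ lies on a hyperplane'' to ``$\mathcal{L}_{U}^{d}(\mathbf{s}\mid\chi)$ genuinely fails to be holomorphic,'' i.e.\ ruling out cancellation in the character sum over $\zeta_{U}^{d}(\mathbf{s}\mid\mathbf{r})$---is not treated in the paper either (its proof simply identifies the zeros of the denominators with the singularities), so your attempt is, if anything, more careful on that point.
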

\begin{proof}
 Let $Q= 1.$ Then the infinite series \eqref{eq3.3} is holomorphic except the poles derived from 
 \begin{equation}\label{eq51}
 \left(\alpha^{q(s_d(1) + 2k_d(1))} - 1 \right) \times \cdots \times \left(\alpha^{q(s_d(d) + 2k_d(d))} - 1 \right) = 0.
 \end{equation} 
 Note that \eqref{eq51} is true if and only if one of the following equation holds  \[s_d(1) = -2k_d(1), s_d(2) = -2k_d(2), \cdots, s_d(d) = -2k_d(d)\]
 for $(k_1, \ldots,k_d) \in \mathbb{N}^d$. Therefore the claim of $(1)$ follows.
 
Similarly, when $ Q= -1$, the infinite series \eqref{eq3.3} is holomorphic except the poles derived from
 \begin{equation}\label{eq52}
 \left(\alpha^{q(s_d(1) + 2k_d(1)) } - (-1)^{q k_d(1)} \right) \times \cdots \times \left(\alpha^{q(s_d(d) + 2k_d(d))} - (-1)^{qk_d(d)} \right) = 0.
 \end{equation} 
Again \eqref{eq52} is true if and only if one of the following equation holds  
\[qs_d(1) = -2qk_d(1), qs_d(2) = -2qk_d(2), \cdots, qs_d(d) = -2qk_d(d)\]
   with $ qk_d(1) \equiv 0 \pmod{2}, qk_d(2) \equiv 0 \pmod{2}, \ldots, qk_d(d) \equiv 0 \pmod{2}$.
   So the statement $(2)$ holds. 
%

   Note that  $ (s_1, \ldots, s_d)= (-m_1, \ldots, -m_d)$ is not a pole for all $(m_1, \ldots, m_d) \in \mathbb{N}^d,$ when $Q \neq \pm 1,$  and $\sqrt{D}$ is an irrational number, due to Lemma \ref{lem3}.  Thus, the statement $(3)$ holds.
 \end{proof}
Now onwards, we define the following notations to write our expression in simpler form. For $1\leq j \leq d$, let
 \begin{align}\label{eq5.55}
 \begin{split}
 \gamma_j ( k_j, \ldots, k_d;r_j) &:= (-1)^{k_j} \frac{Q^{r_j(k_d(j))}\alpha^{-r_j(-m_d(j) + 2k_d(j))}}{1 -  Q^{q(k_d(j))}\alpha^{-q(-m_d(j) + 2k_d(j))}} .
\end{split}
\end{align}
Now, by replacing $k_j$ by $m_j - k_j$ in \eqref{eq5.55}, we have the following. 
\begin{align}\label{eq5.55a}
\begin{split}
\gamma_j ( \overline{k_j}, \ldots, k_d; r_j) &:= (-1)^{m_j -k_j} \frac{Q^{r_j((m_j - k_j + k_{d}(j+1))}\alpha^{-r_j(-m_d(j) + 2(m_j - k_j + k_{d}(j+1))}}{1 -  Q^{q((m_j - k_j + k_{d}(j+1))}\alpha^{-q(-m_d(j) + 2(m_j - k_j+ k_{d}(j+1))}}.
\end{split}
 \end{align}
Further, we denote
 \begin{align}\label{eq5.57}
\begin{split}
\sigma_0&(k_1, \ldots k_d) =  \gamma_1(k_1,\ldots, k_d; r_1)\times  \gamma_2(k_2,\ldots, k_d;r_2) \times \cdots \times \gamma_d(k_d; r_d)\\
\sigma_{q}&(k_1, \ldots,\overline{k_{i_1}}, \ldots, \overline{k_{i_q}}, \ldots, k_d) =  \gamma_1(k_1, \ldots,\overline{k_{i_1}}, \ldots, \overline{k_{i_q}}, \ldots, k_d; r_1)\\
&\times \cdots \times \gamma_{i_1}(\overline{k_{i_1}}, \ldots, \overline{k_{i_q}}, \ldots, k_d;r_{i_1})
\gamma_{i_2}(\overline{k_{i_2}}, \ldots, \overline{k_{i_q}}, \ldots, k_d; r_{i_2} )\\
&\times \cdots\times \gamma_{i_q}(\bar{k_{i_q}},\ldots, k_d; r_{i_q})\times \cdots\times \gamma_{d}(k_{d}; r_d).
\end{split}
\end{align}

Note  that $\sigma_{q}(k_1, \ldots,\overline{k_{i_1}}, \ldots, \overline{k_{i_q}}, \ldots, k_d)$ defines that, $q$ number of integers in the tuple $(k_1,\ldots, k_d)$ changes from $k_{t}$ to $m_{t}-k_{t}$ for $1\leq t \leq q.$ 
 The following proposition is very crucial to prove Proposition \ref{prop14}.
\begin{proposition}\label{prop13}
Let $\psi$ be the non-trivial automorphism of $\mbox{Gal}(\mathbb{Q}(\sqrt{D})/ \mathbb{Q})$ and $\sigma_q$ as in \eqref{eq5.57}.
 Then, for any $ 0 \leq q \leq d,$ we obtain 
 \begin{align}\label{eq5.59}
 \begin{split}
 \psi(\sigma_{q}(k_1, &\ldots,\overline{k_{i_1}}, \ldots, \overline{k_{i_q}}, \ldots, k_d))\\ &= (-1)^{m_d(1)} (\sigma_{d-q}(\overline{k_1}, \ldots,\overline{k_{i_1-1}},k_{i_1}, \ldots,k_{i_q}, \overline{k_{i_q +1}}, \ldots, \overline{k_d})).
 \end{split}
   \end{align}
  \end{proposition}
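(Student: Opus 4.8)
The plan is to use that $\psi$ is a field automorphism of $\mathbb{Q}(\sqrt D)$ and therefore distributes over the product defining $\sigma_q$, reducing the whole identity to one computation per factor $\gamma_t$. First I would record the action of $\psi$ on the generators. Since $P,Q\in\mathbb{Q}$ and $\alpha\beta=Q$, the non-trivial automorphism $\psi$ fixes $Q$ and interchanges $\alpha$ and $\beta$, so $\psi(\alpha)=\beta=Q\alpha^{-1}$. Because we are evaluating at $(s_1,\ldots,s_d)=(-m_1,\ldots,-m_d)$, every exponent of $\alpha$ and of $Q$ appearing in \eqref{eq5.55} is an integer; hence each $\gamma_t$ lies in $\mathbb{Q}(\sqrt D)$ and $\psi$ may be applied termwise with no analytic subtlety.

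The core step is a single-factor identity. Fix $t$, let $\tilde k_s\in\{k_s,\,m_s-k_s\}$ denote the (possibly barred) arguments occurring in one factor, write $\widehat k_s:=m_s-\tilde k_s$ for the complementary value, and put $K:=\sum_{s=t}^d\tilde k_s$ for the corresponding value of $k_d(t)$. Substituting $\beta=Q\alpha^{-1}$ gives $\psi(\alpha^{-r_t(-m_d(t)+2K)})=Q^{-r_t(-m_d(t)+2K)}\alpha^{r_t(-m_d(t)+2K)}$, and the analogous formula with $q$ in place of $r_t$ for the denominator power. Multiplying by the powers $Q^{r_tK}$ and $Q^{qK}$ already present in $\gamma_t$ and using $K-(-m_d(t)+2K)=m_d(t)-K=\sum_{s=t}^d\widehat k_s$, every occurrence of $K$ is replaced by the complementary sum. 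Comparing the prefactor $(-1)^{\tilde k_t}$ carried by $\psi(\gamma_t)$ with the prefactor $(-1)^{\widehat k_t}=(-1)^{m_t-\tilde k_t}$ of the complemented factor, and using $(-1)^{\tilde k_t-(m_t-\tilde k_t)}=(-1)^{m_t}$, I obtain
$$\psi\big(\gamma_t(\tilde k_t,\ldots,\tilde k_d;r_t)\big)=(-1)^{m_t}\,\gamma_t\big(\widehat k_t,\ldots,\widehat k_d;r_t\big).$$
This termwise substitution is the only delicate computation and is the main obstacle: one must keep the $Q$- and $\alpha$-exponents aligned under $\psi$ and respect that the numerator carries $r_t$ while the denominator carries $q$.

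Finally I would reassemble the factors. Writing $\sigma_q=\prod_{t=1}^d\gamma_t$ with barred positions $\{i_1,\ldots,i_q\}$ and applying the single-factor identity in each factor gives
$$\psi(\sigma_q)=\Big(\prod_{t=1}^d(-1)^{m_t}\Big)\prod_{t=1}^d\gamma_t(\widehat k_t,\ldots,\widehat k_d;r_t)=(-1)^{m_d(1)}\prod_{t=1}^d\gamma_t(\widehat k_t,\ldots,\widehat k_d;r_t),$$
since $\sum_{t=1}^d m_t=m_d(1)$. Passing from $\tilde k_s$ to $\widehat k_s$ complements the barring, sending the barred set $\{i_1,\ldots,i_q\}$ to its complement $\{1,\ldots,i_1-1\}\cup\{i_q+1,\ldots,d\}$; thus the complemented product is exactly $\sigma_{d-q}(\overline{k_1},\ldots,\overline{k_{i_1-1}},k_{i_1},\ldots,k_{i_q},\overline{k_{i_q+1}},\ldots,\overline{k_d})$. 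This yields \eqref{eq5.59}, and all that remains is the verified substitution of the previous paragraph together with the sign count producing $(-1)^{m_d(1)}$.
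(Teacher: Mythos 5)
Your proposal is correct, but there is little in the paper to compare it against: the paper omits the proof of Proposition \ref{prop13} entirely, saying only that it ``follows from \cite[Proposition 6]{Meher2018} with some minor modifications''. Your argument supplies exactly the computation that this citation conceals, adapted to the shifted setting of this paper (numerator exponents carrying $r_t$, denominator exponents carrying the modulus $q$). The single-factor identity at the heart of your proof checks out: with $\psi(Q)=Q$ and $\psi(\alpha)=\beta=Q\alpha^{-1}$ (legitimate since $P,Q\in\mathbb{Q}$ and $\alpha\notin\mathbb{Q}$, so the nontrivial automorphism must swap the two roots of $x^2-Px+Q$), and writing $K=\sum_{s\geq t}\tilde k_s$, $E=-m_d(t)+2K$, one gets
\begin{equation*}
\psi\left(Q^{r_tK}\alpha^{-r_tE}\right)=Q^{r_t(K-E)}\alpha^{r_tE}=Q^{r_tK'}\alpha^{-r_tE'},\qquad
\psi\left(1-Q^{qK}\alpha^{-qE}\right)=1-Q^{qK'}\alpha^{-qE'},
\end{equation*}
where $K'=m_d(t)-K=\sum_{s\geq t}(m_s-\tilde k_s)$ and $E'=-m_d(t)+2K'=-E$, while the sign ratio is $(-1)^{\tilde k_t}/(-1)^{m_t-\tilde k_t}=(-1)^{m_t}$; multiplying over $t=1,\dots,d$ produces $(-1)^{m_d(1)}$ and replaces every argument by its complement, turning the barred set $\{i_1,\dots,i_q\}$ into its complement, which is what the right-hand side of \eqref{eq5.59} denotes. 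Two points worth making explicit in a polished write-up: first, the identity is an equality of elements of $\mathbb{Q}(\sqrt{D})$ only at points where no denominator $1-Q^{qK}\alpha^{-qE}$ vanishes, i.e.\ away from the singularities already excluded in Proposition \ref{prop14}; second, your reading of the right-hand side of \eqref{eq5.59} as complementation of the barred positions is the correct interpretation of the paper's somewhat loosely written notation, which displays the complement as though $i_1,\dots,i_q$ formed a consecutive block.
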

  \begin{proof}
  We omit the proof as it follows from \cite[Proposition 6]{Meher2018} with some minor modifications.
  \end{proof}

  We will prove that the rationality of $\zeta_{U}^{d}(\bf  -{\bf m }\mid \bf r)$ in the following Proposition.
 \begin{proposition}\label{prop14}
 For a positive integer $d \geq 1$ let ${\bf m} = (m_1, \ldots, m_d) \in \mathbb{N}^d$.  Let $P, Q \in \mathbb{Q}$ and $\sqrt{D}$ be an irrational number. Then $\zeta_U^d(\bf -{\bf m}\mid {\bf r})$ is rational except for singularities.
 \end{proposition}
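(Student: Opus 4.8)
The plan is to read off the value from the explicit meromorphic expression \eqref{eq3.3} for $\zeta_U^d({\bf s}\mid{\bf r})$ and specialize at ${\bf s}=-{\bf m}$. The decisive observation is that the binomial factor $\binom{-s_i}{k_i}$ becomes $\binom{m_i}{k_i}$, which vanishes for $k_i>m_i$; hence every infinite sum over $k_i$ collapses to the finite range $0\le k_i\le m_i$. Away from the singular hyperplanes — these occur only in the cases $Q=\pm1$ located by Proposition \ref{prop2}, whereas for $Q\neq\pm1$ Lemma \ref{lem3} guarantees that $(-m_1,\ldots,-m_d)$ is never a pole — the value $\zeta_U^d(-{\bf m}\mid{\bf r})$ is therefore a \emph{finite} sum. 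Each constituent of this sum, namely the prefactor $D^{-m_d(1)/2}$ and the rational functions of $\alpha$ and $Q$ packaged into the $\gamma_j$, lies in the quadratic field $\mathbb{Q}(\sqrt{D})$ (the denominators are nonzero precisely because we avoid the poles). Consequently $\zeta_U^d(-{\bf m}\mid{\bf r})\in\mathbb{Q}(\sqrt{D})$.

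To promote membership in $\mathbb{Q}(\sqrt{D})$ to rationality, I would show the value is fixed by the non-trivial automorphism $\psi\in\mathrm{Gal}(\mathbb{Q}(\sqrt{D})/\mathbb{Q})$, which sends $\sqrt{D}\mapsto-\sqrt{D}$ and hence interchanges $\alpha$ and $\beta$. Using the notation of \eqref{eq5.57}, I would write
\[
\zeta_U^d(-{\bf m}\mid{\bf r}) = D^{-m_d(1)/2}\sum_{0\le k_j\le m_j}\binom{m_1}{k_1}\cdots\binom{m_d}{k_d}\,\sigma_0(k_1,\ldots,k_d),
\]
and apply $\psi$ term by term. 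Since $\psi(\sqrt{D})=-\sqrt{D}$, one has $\psi(D^{-m_d(1)/2})=(-1)^{m_d(1)}D^{-m_d(1)/2}$, while Proposition \ref{prop13} in the case $q=0$ yields $\psi(\sigma_0(k_1,\ldots,k_d))=(-1)^{m_d(1)}\sigma_d(\overline{k_1},\ldots,\overline{k_d})$. The two sign contributions multiply to $(-1)^{2m_d(1)}=1$, so the signs disappear entirely.

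The closing step is the substitution $k_j\mapsto m_j-k_j$ in the finite sum. Because $\binom{m_j}{k_j}=\binom{m_j}{m_j-k_j}$ the binomial weights are invariant, and by the definition \eqref{eq5.55a} of the barred $\gamma_j$ the substitution converts $\sigma_d(\overline{k_1},\ldots,\overline{k_d})$ back into $\sigma_0(k_1,\ldots,k_d)$, the ranges $0\le k_j\le m_j$ being preserved so that the summation is merely relabelled. This gives $\psi\big(\zeta_U^d(-{\bf m}\mid{\bf r})\big)=\zeta_U^d(-{\bf m}\mid{\bf r})$, whence the value is rational. I expect the main obstacle to lie in the bookkeeping of this reflection: one must check that the Galois action faithfully realizes the bar-operation recorded in Proposition \ref{prop13}, and that the re-indexing carries $\sigma_d$ exactly onto $\sigma_0$ leaving no residual sign — this is precisely where the factor $(-1)^{m_d(1)}$ of Proposition \ref{prop13} must be matched against $\psi(D^{-m_d(1)/2})$, and where excluding the pole hyperplanes of Proposition \ref{prop2} ensures that every manipulation is performed on a genuinely finite, well-defined element of $\mathbb{Q}(\sqrt{D})$.
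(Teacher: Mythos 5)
Your proposal is correct and follows essentially the same route as the paper: both truncate the explicit expression \eqref{eq3.3} at ${\bf s}=-{\bf m}$ to a finite sum via the vanishing of $\binom{m_i}{k_i}$ for $k_i>m_i$, place the value in $\mathbb{Q}(\sqrt{D})$, and deduce invariance under the nontrivial automorphism $\psi$ from Proposition \ref{prop13} combined with the symmetry $\binom{m_j}{k_j}=\binom{m_j}{m_j-k_j}$ under the reflection $k_j\mapsto m_j-k_j$. The only presentational difference is that the paper encodes this invariance by first averaging over all $2^d$ bar-reflections (equations \eqref{eq5.54}--\eqref{eq5.56}, then deferring to Theorem 7 of \cite{Meher2018}), whereas you apply $\psi$ directly to the unsymmetrized finite sum and relabel the summation indices.
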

 
 \begin{proof}
Note that
  \begin{align}\label{eq5.54}
 \begin{split}
 \zeta_{U}^{d}&(\bf  -{\bf m }\mid \bf r)= \\
& = D^{-m_d(1)/2} \sum_{k_1 =0}^{m_1}\binom{m_1}{k_1} (-1)^{k_1} \times \cdots \times \sum_{k_{d-1} =0}^{m_{d-1}}\binom{m_{d-1}}{k_{d-1}} (-1)^{k_{d-1}}\\
&\frac{1}{2}\bigg[ \sum_{k_d =0}^{m_d}\binom{m_d}{k_d} (-1)^{k_d} \times \frac{Q^{k_d(1)r_1} \alpha^{-r_1(- m_d(1) + 2(k_d(1)))}}{1 -  Q^{qk_d(1)}\alpha^{-q(-m_d(1) + 2(k_d(1))}}  \cdots \frac{ Q^{k_d(d)r_d}\alpha^{-r_d(-m_d(d) + 2k_d(d))}}{(1- Q^{qk_d(d)}\alpha^{-q(-m_d(d) + 2k_d(d)})} \\
&+\sum_{k_d =0}^{m_d}\binom{m_d}{m_d - k_d}  (-1)^{m_d - k_d} \frac{ Q^{r_d (k_{d-1}(1) + m_d - k_d)}\alpha^{-r_d(-m_d(1) + 2k_{d-1}(1)+2( m_d-k_d ))}}{(1- Q^{q(k_{d-1}(1) + m_d - k_d)}\alpha^{-q(-m_d(1) + 2k_{d-1}(1)+2( m_d-k_d ))})} \times  \cdots \\
& \times    \frac{ Q^{(m_d- k_d)r_d}\alpha^{-r_d(m_d - 2k_d)}}{(1- Q^{q(m_d- k_d)}\alpha^{-q(m_d - 2k_d})}\Bigg].
 \end{split}
 \end{align}
 By continuing in this process for each index $k_t$ where $t = d-1, d-2, \ldots, 1$ and using the notations \eqref{eq5.55}, we have 
  \begin{align}\label{eq5.56}
  \begin{split}
 \zeta_{U}^{d}&(-{\bf m }\mid {\bf r})= \frac{D^{-m_d(1) /2}}{2^d} \sum_{k_1 =0}^{m_1}\binom{m_1}{k_1}  \times \cdots \times \sum_{k_d =0}^{m_d}\binom{m_d}{k_d} \\
 & \Bigg[\prod_{t =1}^{d}\theta_t(k_t,\ldots, k_d; m_t) + \sum_{j =1}^{d}\left(\prod_{t =1}^{d}\theta_t(k_t,\ldots, \overline{k_j},\ldots, k_d; m_t)\right)\\
 & + \sum_{1\leq i < j \leq d}\left(\prod_{t =1}^{d}\theta_t(k_t,\ldots, \overline{k_i},\ldots, \overline{k_j},\ldots, k_d; m_t)\right)+\cdots + \prod_{t =1}^{d}\theta_t(\overline{k_t},\ldots, \overline{k_d}; m_t)\Bigg].
  \end{split}
 \end{align}
Now rest of proof is similar to the proof of Theorem 7 in \cite{Meher2018}. Hence the result follows.
 \end{proof}
 
The following theorem deals with the special values of multiple Lucas  $L$-function for a quadratic character.
 \begin{theorem}\label{th15}
For a positive integer $d \geq 1,$ let ${\bf m} = (m_1, \ldots, m_d) \in \mathbb{N}^d$. For $P, Q \in \mathbb{Q}, \sqrt{D}$ an irrational number and $\chi_1, \ldots, \chi_d$ quadratic characters, $\mathcal{L}_U(\bf -{\bf m}\mid \chi)$ is rational valued except for the singularities.
 \end{theorem}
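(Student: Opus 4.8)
The plan is to deduce Theorem \ref{th15} from the explicit linear relation \eqref{eq3.6d} together with the rationality of the shifted zeta values already established in Proposition \ref{prop14}. By \eqref{eq3.6d} one has, as an identity of meromorphic functions on $\mathbb{C}^d$,
\[\mathcal{L}_U^d({\bf s}\mid \chi) = \sum_{r_1=1}^q \cdots \sum_{r_d=1}^q \chi_1(r_1)\chi_2(r_1+r_2)\cdots \chi_d(r_1+\cdots+r_d)\, \zeta_U^d({\bf s}\mid {\bf r}).\]
Since both sides are meromorphic continuations agreeing on the convergence domain $\mathfrak{D}_d$, the identity holds everywhere by uniqueness of continuation, and in particular it may be evaluated at ${\bf s} = -{\bf m}$.

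First I would observe that the poles of each $\zeta_U^d({\bf s}\mid {\bf r})$ lie on the hyperplanes \eqref{eq3.6}, which are independent of ${\bf r}$; hence off this common singular locus every term on the right is simultaneously finite. By Proposition \ref{prop14}, under the hypotheses $P,Q\in\mathbb{Q}$ and $\sqrt{D}$ irrational, each such value $\zeta_U^d(-{\bf m}\mid {\bf r})$ is rational. Next I would note that because $\chi_1,\ldots,\chi_d$ are quadratic characters each factor $\chi_i(r_1+\cdots+r_i)$ lies in $\{-1,0,1\}$, so every coefficient $\chi_1(r_1)\cdots\chi_d(r_1+\cdots+r_d)$ is an integer. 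Consequently $\mathcal{L}_U^d(-{\bf m}\mid \chi)$ is a finite $\mathbb{Z}$-linear combination of rational numbers, hence rational, whenever $-{\bf m}$ avoids the hyperplanes \eqref{eq3.6}; these exceptional points are precisely those excluded by the holomorphicity criteria of Proposition \ref{prop2}.

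The genuine content of the result is entirely contained in Proposition \ref{prop14}, whose proof exploits the Galois symmetry \eqref{eq5.59}: although the expansion \eqref{eq5.54} carries the factor $D^{-m_d(1)/2}$ and further half-integer powers of $\sqrt{D}$, grouping the terms by the involution $k_t\mapsto m_t-k_t$ produces an expression fixed by the nontrivial element of $\mathrm{Gal}(\mathbb{Q}(\sqrt{D})/\mathbb{Q})$, forcing it into $\mathbb{Q}$. The only feature specific to the present statement is the quadratic hypothesis on the characters, which is exactly what keeps the combining coefficients rational; for a general Dirichlet character the values $\chi_i(\cdot)$ are roots of unity in a cyclotomic field and the resulting combination, though algebraic, need not lie in $\mathbb{Q}$. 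Thus there is no real obstacle beyond bookkeeping: verifying that \eqref{eq3.6d} remains valid at the evaluation point and that every summand is defined there, both of which follow from the holomorphicity criteria of Proposition \ref{prop2}.
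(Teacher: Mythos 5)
Your proposal is correct and follows essentially the same route as the paper: both evaluate the linear relation \eqref{eq3.6d} at ${\bf s}=-{\bf m}$, invoke Proposition \ref{prop14} for the rationality of each $\zeta_U^d(-{\bf m}\mid{\bf r})$ away from the singular hyperplanes, and use that quadratic characters take values in $\{-1,0,1\}$ so the combining coefficients are rational. Your write-up is in fact more careful than the paper's (making explicit the uniqueness of meromorphic continuation, the ${\bf r}$-independence of the singular locus, and why the quadratic hypothesis cannot be dropped), but the underlying argument is identical.
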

  \begin{proof}
Note that
 $$
 \mathcal{L}_U(\bf -{\bf m}\mid \chi)
=  \sum_{r_1 = 1}^{q}\chi_1(r_1) \sum_{r_2 = 1}^{q}\chi_{2}(r_1 + r_2)\cdots \sum_{r_d=1}^{q}\chi_d(r_1+\cdots + r_d)  \ \ \zeta_{U}^{d}(\bf  -{\bf m }\mid \bf r).
$$
Since $\chi $ is a quadratic character and by by Proposition \ref{prop14} $\zeta_{U}^{d}(\bf  -{\bf m }\mid \bf r)$ is rational except the singularities. This completes the proof. 
 \end{proof}
 
The following result can be proved in the same line as Theorem 7 in \cite{Meher2018} under some mild conditions.
 \begin{theorem}\label{th16}
For a positive integer $d \geq 1,$ let ${\bf m} = (m_1, \ldots, m_d) \in \mathbb{N}^d$. Let $P, Q \in \mathbb{Q}, \sqrt{D}$ be an irrational number and $f_1, \ldots, f_d$ are additive characters with $\prod_{k=i}^{d}\left|f_{k}(1)\right|\leq 1$ and $\prod_{k=i}^{d}\left|f_{k}(1)\right|$ is  rational valued for all $1\leq i\leq d$. Then $\mathcal{L}_{U}^{d}(\bf -m \mid \bf f)$ is rational except for singularities.
 \end{theorem}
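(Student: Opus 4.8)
The plan is to evaluate $\mathcal{L}_U^d(-{\bf m}\mid {\bf f})$ directly from the closed form \eqref{eq3.9} and then run the Galois-descent argument of Proposition \ref{prop14} (i.e.\ Theorem 7 of \cite{Meher2018}) in the additive setting. First I would specialize \eqref{eq3.9} at $s_j=-m_j$: since $\binom{-s_j}{k_j}=\binom{m_j}{k_j}$ vanishes for $k_j>m_j$, each infinite $k_j$-sum collapses to $0\le k_j\le m_j$, giving
\begin{align*}
\mathcal{L}_U^d(-{\bf m}\mid {\bf f})
&= D^{-m_d(1)/2}\sum_{k_1=0}^{m_1}\binom{m_1}{k_1}(-1)^{k_1}\cdots\sum_{k_d=0}^{m_d}\binom{m_d}{k_d}(-1)^{k_d}\\
&\quad\times\prod_{j=1}^{d}\frac{g_j^d(1)\,Q^{k_d(j)}}{\alpha^{-m_d(j)+2k_d(j)}-g_j^d(1)\,Q^{k_d(j)}},
\end{align*}
valid whenever $(-m_1,\dots,-m_d)$ avoids the polar hyperplanes of Theorem \ref{thm3} (no denominator vanishing), which is exactly the ``except for singularities'' clause. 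Every factor lies in $\mathbb{Q}(\sqrt{D})$ as soon as each partial product $g_j^d(1)=\prod_{k=j}^{d}f_k(1)$ is rational; this is the role of the hypothesis that $\prod_{k=i}^{d}|f_k(1)|$ is rational (when the $f_k(1)$ are real, as in the motivating example $f(1)=-1$, one has $g_j^d(1)=\pm\prod_{k=j}^d|f_k(1)|\in\mathbb{Q}$). Thus $\mathcal{L}_U^d(-{\bf m}\mid {\bf f})\in\mathbb{Q}(\sqrt{D})$, and it remains to show it is fixed by the nontrivial automorphism $\psi\in\mathrm{Gal}(\mathbb{Q}(\sqrt{D})/\mathbb{Q})$.

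Next I would symmetrize exactly as in \eqref{eq5.54}--\eqref{eq5.56}. Introduce the additive analogue of \eqref{eq5.55},
\[\tilde\gamma_j(k_j,\ldots,k_d):=(-1)^{k_j}\frac{g_j^d(1)\,Q^{k_d(j)}}{\alpha^{-m_d(j)+2k_d(j)}-g_j^d(1)\,Q^{k_d(j)}},\]
together with its barred version $\tilde\gamma_j(\overline{k_j},\ldots,k_d)$ obtained by $k_j\mapsto m_j-k_j$. Using $\binom{m_j}{k_j}=\binom{m_j}{m_j-k_j}$ in each variable, I would rewrite the value as the $2^{-d}$-averaged sum over all $2^d$ choices of barred indices, producing the additive analogue of \eqref{eq5.56} whose bracket is $\sum_{q=0}^{d}\sum_{|S|=q}\tilde\sigma_q$, where $\tilde\sigma_q$ is, as in \eqref{eq5.57}, the product of $\tilde\gamma$'s with the indices in $S$ barred.

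The heart of the argument is the additive analogue of Proposition \ref{prop13}: for $0\le q\le d$,
\[\psi\big(\tilde\sigma_q(k_1,\ldots,\overline{k_{i_1}},\ldots,\overline{k_{i_q}},\ldots,k_d)\big)=(-1)^{m_d(1)}\,\tilde\sigma_{d-q}(\overline{k_1},\ldots,k_{i_1},\ldots,k_{i_q},\ldots,\overline{k_d}),\]
i.e.\ $\psi$ sends the term with barred set $S$ to $(-1)^{m_d(1)}$ times the term with barred set $S^{c}$. This is checked as in \cite[Proposition 6]{Meher2018}: using $\psi(\alpha)=\beta=Q/\alpha$, $\psi(Q)=Q$ and, crucially, $\psi(g_j^d(1))=g_j^d(1)$, a short manipulation gives $\psi(\tilde\gamma_j(\text{unbarred}))=(-1)^{m_j}\tilde\gamma_j(\text{all of }j,\dots,d\text{ barred})$ (clearing $Q^{k_d(j)}$ turns $\alpha^{-m_d(j)+2k_d(j)}$ into $\alpha^{m_d(j)-2k_d(j)}$, which is precisely the effect of barring), and multiplying these over $j$ realizes the complementation $S\mapsto S^{c}$ with total sign $(-1)^{m_d(1)}$. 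Granting the identity, summing over all $S$ shows $\psi$ permutes the symmetrized terms and multiplies the bracket of \eqref{eq5.56} by $(-1)^{m_d(1)}$. Since $\psi(D^{-m_d(1)/2})=(-\sqrt{D})^{-m_d(1)}=(-1)^{m_d(1)}D^{-m_d(1)/2}$ and the binomials are rational, the two signs cancel, so $\psi\big(\mathcal{L}_U^d(-{\bf m}\mid {\bf f})\big)=\mathcal{L}_U^d(-{\bf m}\mid {\bf f})$ and the value lies in $\mathbb{Q}(\sqrt{D})^{\psi}=\mathbb{Q}$.

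The main obstacle is the $\psi$-equivariance identity above in the presence of the extra factors $g_j^d(1)$: one must verify that these are genuinely $\psi$-invariant — which forces $g_j^d(1)\in\mathbb{Q}$ and is exactly what the ``mild conditions'' on the $f_k(1)$ supply — and that their appearance in both numerator and denominator survives unchanged under $\alpha\mapsto Q/\alpha$ so as not to disturb the complementation of barred indices. Were some $g_j^d(1)$ irrational, this cancellation would fail and rationality could not be expected; once the rationality of all $g_j^d(1)$ is in hand, the remaining bookkeeping is the same combinatorial cancellation as in \cite{Meher2018}.
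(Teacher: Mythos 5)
Your proposal is correct and is precisely the argument the paper intends: the paper's entire ``proof'' of Theorem \ref{th16} is a one-line deferral to Theorem 7 of \cite{Meher2018} ``under some mild conditions,'' and your route --- specializing \eqref{eq3.9} at ${\bf s}=-{\bf m}$ so the binomial sums terminate, symmetrizing via $\binom{m_j}{k_j}=\binom{m_j}{m_j-k_j}$, and then showing the nontrivial automorphism $\psi$ of $\mathbb{Q}(\sqrt{D})$ exchanges barred-index sets $S\leftrightarrow S^{c}$ with sign $(-1)^{m_d(1)}$ cancelling against $\psi(D^{-m_d(1)/2})$ --- is exactly the additive transplant of Propositions \ref{prop13} and \ref{prop14}. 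Your remark that the hypothesis must actually force $g_j^d(1)\in\mathbb{Q}$, and not merely $|g_j^d(1)|\in\mathbb{Q}$, correctly identifies what the paper's unspecified ``mild conditions'' have to be.
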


\end{document}